\numberwithin{equation}{section}
\newtheorem{theorem}{Theorem}
\newtheorem{lemma}{Lemma}
\newtheorem{remark}{Remark}
\numberwithin{theorem}{section}
\numberwithin{corollary}{section}
\numberwithin{lemma}{section}
\numberwithin{definition}{section}
\numberwithin{proposition}{section}
\numberwithin{remark}{section}
\def\u{\bmit u}
\def\a{\bmit a}
\def\b{\bmit b}
\def\w{\bmit w}
\def\x{ \bmit x}
\def\e{\bmit e}
\def\f{\bmit f}
\def\n{\bmit n}
\def\s{\bmit s}
\def\w{\bmit w}
\def\h{\bmit h}
\def\vi{\bmit v}
\def\L{ \bmit L}
\def\Q{\bmit G}
\def\0{\bf 0}
\def\into{\int\limits_}
\def\div{{\hbox{\rm div}\,}}
\def\Ci{\textsf{\textbf{C}}}
\def\C{\textsf{\textbf{C}}}
\def\Cc{\textsf{\text{C}}}
\def\bmit{\boldsymbol}
\def\XXint#1#2#3{{\setbox0=\hbox{$#1{#2#3}{\int}$ }
\vcenter{\hbox{$#2#3$ }}\kern-.6\wd0}}
\begin{document}

\title{The Stokes paradox in inhomogeneous elastostatics}

%    author one information
\author{Adele Ferone}
\address{Dipartimento di Matematica e Fisica, Universit\`a degli Studi della Campania ``L. Vanvitelli'', Viale Lincoln 5, 81100 Caserta, Italy}
\curraddr{}
\email{adele.ferone@unicampania.it}
\thanks{}

%    author two information
\author{Remigio Russo}
\address{Dipartimento di Matematica e Fisica, Universit\`a degli Studi della Campania ``L. Vanvitelli'', Viale Lincoln 5, 81100 Caserta, Italy}
\curraddr{}
\email{remigio.russo@unicampania.it}
\thanks{}

%    author three information
\author{Alfonsina Tartaglione}
\address{Dipartimento di Matematica e Fisica, Universit\`a degli Studi della Campania ``L. Vanvitelli'', Viale Lincoln 5, 81100 Caserta, Italy}
\curraddr{}
\email{alfonsina.tartaglione@unicampania.it}
\thanks{}

\subjclass[2010]{Primary 74B05, 	35J47, 	35J57 Secondary  	76D07} 

\keywords{Inhomogeneous
elasticity,  two--dimensional exterior domains, existence and uniqueness theorems,
Stokes' paradox}

\date{}

\dedicatory{}

\begin{abstract}
 
We prove that the displacement problem  of  
inhomogeneous elastostatics in a two--dimensional exterior Lipschitz domain   has a unique solution with finite Dirichlet integral $\u$, 
vanishing uniformly at infinity if and only if the boundary  datum satisfies a suitable  compatibility condition (Stokes' paradox).  Moreover,
we prove that it is unique under the sharp condition $\u=o(\log r)$   and decays uniformly at
infinity with a rate depending on the   elasticities. In particular, if  these last ones tend to a homogeneous state at large distance, then $\u=O(r^{-\alpha})$, for every $\alpha<1$.
\end{abstract}

\maketitle

\section{Introduction}

Let $\Omega$ be an exterior Lipschitz domain of ${\Bbb R}^2$.  The
 displacement  problem of plane elastostatics in exterior domains is   to find a solution
to the equations
\begin{equation}
\label{conell}
\begin{array}{r@{}l}
\div\C[\nabla{\u}] & {} ={\0}\quad\hbox{\rm in }\Omega, \\[2pt]
{\u} & {} =\hat{\u} \quad\hbox{\rm on }\partial\Omega, \\[2pt]
\displaystyle\lim_{r\to+\infty}{\u}(x) & {} ={ \0}, 
\end{array}  
\end{equation}
 where ${\u}$ is the (unknown)  displacement field, $\hat{\u}$ is an (assigned)  boundary displacement, ${\C}\equiv [\Cc_{ijhk}]$ is the (assigned) elasticity
tensor, $i.e.$,   a map  from
${\Omega}\times\hbox{\rm Lin}\to\hbox{\rm Sym}$, linear on  Sym and vanishing in $\Omega\times{\rm Skw}$. We shall  assume $\C$ to be symmetric, $i.e.$, $\Cc_{ijhk}=\Cc_{hkij}$ and positive definite, $i.e.$,
\begin{equation}
\label{Sxxa}
\mu_0|{\bmit E}|^2\le{\bmit E}\cdot\C[{\bmit E}]\le \mu_e|{\bmit E}|^2,\quad \forall\,{\bmit E}\in{\rm Sym}, \;\;\hbox{\it a.e. {\rm in}}\  \Omega.
\end{equation}
By appealing to the principle of virtual work and taking into account that
${\bmit\varphi}\in C^\infty_{ 0}(\Omega)$ is an
admissible (or virtual) displacement, we say that ${\u}\in W^{1,q}_{ \rm loc}(\Omega )$
is a weak solution (variational solution  for $q=2$)   to (\ref{conell})$_1$  provided
\begin{equation*}
%\label{SDDe}
\into \Omega\nabla{\bmit\varphi}\cdot{\C}[\nabla{\u}]=0, \quad
\forall{\bmit\varphi}\in C^\infty_{ 0}(\Omega). 
\end{equation*} 
  A weak solution to  (\ref{conell})  is a weak
solution to (\ref{conell})$_1$ which satisfies the boundary condition in the sense of the trace in
Sobolev's spaces and tends to zero at infinity in a   generalized sense. 
If ${\u}\in W^{1,q}_{\rm loc}(\overline\Omega )$ is a weak solution to (\ref{conell})  the traction field on the boundary
$$
\s({\u})={\C}[\nabla{\u}]\n 
$$ 
exists as a well defined field of
$W^{-1/q,q}(\partial\Omega)$ and for $q=2$  the following generalized   {\it work and energy
relation\/} \cite{Gurtin} holds
\begin{equation*}
%\label{GWERR}
\into {\Omega_R}\nabla{\u}\cdot{\C}[\nabla{\u}]=\into {\partial\Omega}\u\cdot\s(\u) 
+\into {\partial S_R}\u\cdot\s(\u),
\end{equation*}
for every  large $R,$ where  with abuse of notation  by $\int_{\Sigma} \u\cdot\s(\u)$ we mean the value of the
functional
$\s(\u)\in
W^{-1/2,2}(\Sigma)$ at $\u\in W^{1/2,2}(\Sigma)$ and $\n$ is the unit outward  (with
respect to $\Omega$) normal  to $\partial\Omega$.   It will be clear from the context when we shall refer to an ordinary integral or to a functional.

It is a routine to show  that under assumption (\ref{Sxxa}), $(\ref{conell})_{1,2}$ has a unique solution
$\u\in D^{1,2}(\Omega)$, we shall call {\it D--solution\/} (for the notation see at the end of this section). Moreover, it exhibits   more regularity provided ${\C}$, $\partial\Omega$ and $\hat{\u}$ are more
regular. In particular, the following well--known theorem holds \cite{Giusti}, \cite{KO}.
\begin{theorem}
\label{Extheon}
Let $\Omega$ be an exterior Lipschitz domain of ${\Bbb R}^2$ and let ${\C}$ satisfy  $(\ref{Sxxa})$\footnote{ For constant $\C$ (homogeneous elasticity) it is sufficient to assume that $\Ci$ is strongly elliptic, $i.e.$, there is $\lambda_0>0$ such that $\lambda_0|\a|^2|\b|^2\le \a\cdot\Ci[\a\otimes\b]\b$, for all $\a,\b\in{\Bbb R}^2$ .}. If
$\hat{\u}\in W^{1/2,2}(\partial\Omega)$, then $(\ref{conell})_{1,2}$ has a  unique D--solution $\u$ which is locally H\"older continuous in $\Omega$.
Moreover, if
$\Omega$ is of class $C^k$, ${\C}\in C^{k-1}_{\rm loc}(\overline\Omega )$  and $\hat{\u}\in
W^{k-1/q,q}(\partial\Omega)$ $(k\ge 1, q\in(1,+\infty))$, then $\u\in W^{k,q}_{\rm loc}(\overline\Omega )$.
\end{theorem}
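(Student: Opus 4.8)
The plan is to establish Theorem~\ref{Extheon} in two stages: first existence and uniqueness of the D--solution under the mere positivity condition (\ref{Sxxa}), and then the interior and up--to--the--boundary regularity under the additional smoothness hypotheses. For the existence part I would work with the homogeneous Dirichlet space $D^{1,2}_0(\Omega)$, the completion of $C^\infty_0(\Omega)$ in the seminorm $\|\nabla\cdot\|_{L^2(\Omega)}$; since $\Omega\subset{\Bbb R}^2$ is exterior and Lipschitz, functions in this space are globally $L^2_{\rm loc}$ modulo the usual two--dimensional subtleties, and Korn's second inequality on $\Omega$ together with (\ref{Sxxa}) makes $\int_\Omega\nabla\w\cdot\C[\nabla\w]$ an equivalent squared norm on $D^{1,2}_0(\Omega)$ (skew parts drop out because $\C$ annihilates $\mathrm{Skw}$ and $\C[\nabla\w]=\C[\widehat{\nabla\w}]$, $\widehat{\nabla\w}$ the symmetric part). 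One first extends the boundary datum $\hat\u\in W^{1/2,2}(\partial\Omega)$ to a field $\u_0\in D^{1,2}(\Omega)$ with compact support near $\partial\Omega$ by a standard trace--lifting, then seeks $\w=\u-\u_0\in D^{1,2}_0(\Omega)$ solving $\int_\Omega\nabla{\bmit\varphi}\cdot\C[\nabla\w]=-\int_\Omega\nabla{\bmit\varphi}\cdot\C[\nabla\u_0]$ for all ${\bmit\varphi}\in C^\infty_0(\Omega)$; the right side is a bounded linear functional on $D^{1,2}_0(\Omega)$, so the Lax--Milgram theorem gives a unique $\w$, hence a unique D--solution $\u=\w+\u_0$. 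Uniqueness in the class $D^{1,2}$ is immediate: the difference of two D--solutions lies in $D^{1,2}_0(\Omega)$, is $\C$--harmonic, so testing against itself and using coercivity forces its gradient, hence the field, to vanish.

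Next I would address interior H\"older regularity. The equation $\div\C[\nabla\u]=\0$ with $\C$ merely bounded and measurable and satisfying (\ref{Sxxa}) is a linear elliptic system in divergence form, and in two space dimensions the De Giorgi--Nash--Moser theory applies to each component — more precisely, one invokes the $W^{1,2}\hookrightarrow C^{0,\gamma}_{\rm loc}$ regularity for solutions of elliptic systems with $L^\infty$ coefficients valid specifically in dimension $2$ (this is where the planar hypothesis is essential, as systems in higher dimensions admit discontinuous solutions). Concretely one can reduce to a scalar--type Caccioppoli/Widman hole--filling argument or cite the classical result directly; the upshot is $\u\in C^{0,\gamma}_{\rm loc}(\Omega)$ for some $\gamma\in(0,1)$. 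For the higher--order statement, assuming $\Omega\in C^k$, $\C\in C^{k-1}_{\rm loc}(\overline\Omega)$, $\hat\u\in W^{k-1/q,q}(\partial\Omega)$, I would bootstrap: on a subdomain away from $\partial\Omega$ one uses the standard interior Schauder/$L^p$ estimates for elliptic systems (freezing coefficients, difference quotients, and the regularity of $\C$) to climb from $W^{1,2}_{\rm loc}$ up to $W^{k,q}_{\rm loc}$; near $\partial\Omega$, after a $C^k$ flattening of the boundary and a trace--lifting of $\hat\u$, one applies the corresponding boundary $L^p$ estimates for the Dirichlet problem for elliptic systems (Agmon--Douglis--Nirenberg type), again iterating on $k$. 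This is essentially the content of the cited references \cite{Giusti} and \cite{KO}, so the proof can legitimately be compressed to pointing at them.

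The main obstacle, conceptually, is the correct functional--analytic setup of $D^{1,2}$ and $D^{1,2}_0$ in an \emph{exterior planar} domain: in ${\Bbb R}^2$ the Dirichlet seminorm is not by itself a norm controlling the function (constants have zero Dirichlet integral), so one must be careful about which constant fields are admissible and about the precise sense in which a D--solution ``tends to zero at infinity'' — but here the boundary condition on $\partial\Omega$ and membership in $D^{1,2}_0$ pin things down, and the generalized decay is exactly the subtle point that the rest of the paper (the Stokes paradox) will analyze. A second, more technical point is verifying that Korn's second inequality holds on the exterior Lipschitz domain in the homogeneous space $D^{1,2}_0(\Omega)$ with a constant independent of truncation; this follows from the interior Korn inequality combined with the exterior structure, but it must be stated carefully so that the coercivity of the bilinear form is genuinely uniform. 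Everything else — Lax--Milgram, the planar De Giorgi--Nash estimate, and the Schauder/$L^p$ bootstrap — is routine and already documented in the literature, so in the write--up I would state the existence argument in a few lines and defer the regularity assertions to \cite{Giusti} and \cite{KO}.
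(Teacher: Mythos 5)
Your argument is the standard variational--plus--regularity proof that the paper itself treats as routine and simply delegates to \cite{Giusti} and \cite{KO} (no proof is given in the text), so it matches the intended approach: Lax--Milgram in the homogeneous Dirichlet space for existence and uniqueness, the two--dimensional Morrey/hole--filling decay estimate for local H\"older continuity, and the usual $L^p$/Schauder bootstrap for the higher--order statement. One small simplification: coercivity of the form on the completion of $C^\infty_0(\Omega)$ requires only the \emph{first} Korn inequality $\|\nabla\w\|_{L^2}\le\sqrt2\,\|\hat\nabla\w\|_{L^2}$, which holds for compactly supported fields by integration by parts with a universal constant (it is the version the paper itself invokes later), so the uniformity issue you single out as a technical obstacle does not actually arise.
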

 
The main problem left open by Theorem \ref{Extheon} is to establish the behavior of the variational
solution at large distance: {\it does  $\u$ converge  to  a constant vector at infinity and,
if so, does  $($or under what conditions and in what sense$)$  $\u$ satisfies
$(\ref{conell})_{3}$?} For constant  $\C$ (homogeneous elasticity) the  situation is well understood (see, $e.g.$, \cite{RussoPG}, \cite{RSima}), at least in its negative information. Indeed, a solution to  (\ref{conell})$_{1,2}$ is expressed by a simple layer potential
$$
\u(x)=\vi[{\bmit\psi}](x)+{\bmit\kappa}, 
$$
for some ${\bmit\psi}\in W^{-1/2,2}(\partial\Omega)$,
where
$$
\vi[{\bmit\psi}](x) =\into{\partial\Omega}\boldsymbol{\mathscr{U}}(x-y){\bmit\psi} (y)ds_y
$$
 is the simple layer  with density ${\bmit\psi}$ such that
 \begin{equation}
\label{DAdob}
\into{\partial\Omega}{\bmit\psi}={\bf 0}
\end{equation}
and
$$
\boldsymbol{\mathscr{U}}(x-y)= {\boldsymbol{\bmit\Phi}}_0\log|x-y|+{\boldsymbol{\bmit\Phi}}(x-y),
$$
with $ {\boldsymbol{\bmit\Phi}}_0\in {\rm Lin}$ and ${\boldsymbol{\bmit\Phi}}:{\Bbb R}^2\setminus\{o\}\rightarrow{\rm Lin}$ homogeneous of degree zero, is the fundamental solution to equations (\ref{conell}) (see, $e.g.$, \cite{John}). The space $\boldsymbol{\frak C}=\{{\bmit\psi}\in L^2(\partial\Omega):{\bmit v}[{\bmit\psi}]_{\vert\partial\Omega}=\hbox{\rm constant}\}$ has dimension two and if $\{{\bmit\psi}_1,{\bmit\psi}_2\}$ is a basis of $\boldsymbol{\frak C}$, then $\{\int_{\partial\Omega}{\bmit\psi}_1,\int_{\partial\Omega}{\bmit\psi}_2\}$ is a basis of ${\Bbb R}^2$; (\ref{DAdob}) assures that $\u-{\bmit\kappa}=O(r^{-1})$, where the constant vector ${\bmit\kappa}$ is determined by the relation
\begin{equation*}
%\label{DCCCob}
\into{\partial\Omega}(\hat{\u}-{\bmit\kappa})\cdot {\bmit\psi}'=0\quad\forall{\bmit\psi}'\in\boldsymbol{\frak C}.
\end{equation*}
Hence it follows
\begin{theorem}
\label{Exxxxon}
Let $\Omega$ be an exterior Lipschitz domain of $\,{\Bbb R}^2$ and let ${\C}$ be  
constant and strongly elliptic. If
$\hat{\u}\in W^{1/2,2}(\partial\Omega)$, then $(\ref{conell})$ 
has a unique  D--solution, analytic in $\Omega$, 
 if and only if 
 \begin{equation}
\label{DCCCob}
\into{\partial\Omega}\hat{\u} \cdot {\bmit\psi}'=0,\quad\forall{\bmit\psi}'\in\boldsymbol{\frak C}\;\Leftrightarrow\;\into{\partial\Omega}\s(\u)={\bf 0}.
\end{equation}
Moreover, $\u$ is  unique in the class 
  \begin{equation}
\label{DCFdb}
 \{\u\in W^{1,2}_{\rm loc}(\overline\Omega): \u=o(\log r)\}
\end{equation}
and modulo a field   ${\bmit v}[{\bmit\psi}']- {\bmit v}[{\bmit\psi}']_{\vert\partial\Omega}$, ${\bmit\psi}'\in\boldsymbol{\frak C}$, in the class 
  \begin{equation*}
%\label{DCFdb2}
 \{\u\in W^{1,2}_{\rm loc}(\overline\Omega):\u=o(r)\}.
\end{equation*}

\end{theorem}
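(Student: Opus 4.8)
\emph{Outline.}
The plan is to combine the existence, uniqueness and interior analyticity of the $D$--solution of $(\ref{conell})_{1,2}$ — supplied by Theorem~\ref{Extheon} for constant strongly elliptic $\C$ — with the single--layer representation recalled before the statement; the new ingredients I would need are the reciprocity relation for the fundamental solution (to pin down the constant at infinity) and the classical asymptotic expansion of solutions at infinity (for the two uniqueness assertions).

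\emph{The equivalence.}
First I would note that, for the $D$--solution $\u$, since $\nabla\u\in L^2(\Omega)$ and $\boldsymbol{\bmit\Phi}_0$ is invertible, $\u$ cannot carry a logarithmic term at infinity; hence, after normalizing the density by $(\ref{DAdob})$, one has $\u=\vi[{\bmit\psi}]+{\bmit\kappa}$ with $\u-{\bmit\kappa}=O(r^{-1})$, so $\u\to{\bmit\kappa}$. Next I would use the symmetry of $\boldsymbol{\mathscr U}$ to get the reciprocity $\int_{\partial\Omega}\vi[{\bmit\psi}]_{\vert\partial\Omega}\cdot{\bmit\psi}'=\int_{\partial\Omega}{\bmit\psi}\cdot\vi[{\bmit\psi}']_{\vert\partial\Omega}$, whose right--hand side vanishes for ${\bmit\psi}'\in\boldsymbol{\frak C}$ because $\vi[{\bmit\psi}']_{\vert\partial\Omega}$ is constant and $\int_{\partial\Omega}{\bmit\psi}=\0$; testing $\vi[{\bmit\psi}]_{\vert\partial\Omega}+{\bmit\kappa}=\hat\u$ against ${\bmit\psi}'$ then gives $\int_{\partial\Omega}\hat\u\cdot{\bmit\psi}'={\bmit\kappa}\cdot\int_{\partial\Omega}{\bmit\psi}'$ for every ${\bmit\psi}'\in\boldsymbol{\frak C}$, and since $\{\int_{\partial\Omega}{\bmit\psi}_1,\int_{\partial\Omega}{\bmit\psi}_2\}$ is a basis of ${\Bbb R}^2$ this determines ${\bmit\kappa}$ uniquely. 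Hence $\u$ satisfies $(\ref{conell})_3$ if and only if ${\bmit\kappa}=\0$, i.e.\ if and only if $(\ref{DCCCob})_1$ holds, and $(\ref{DCCCob})_2$ is its equivalent reformulation provided by the divergence theorem (a solution of $(\ref{conell})$ has $\int_{\partial\Omega}\s(\u)=\lim_{R\to+\infty}\int_{\partial S_R}\s(\u)=\0$) together with the flux identity for the elastic single layer, which expresses the net boundary traction through the total density $\int_{\partial\Omega}{\bmit\psi}$. Uniqueness and analyticity of this $D$--solution are then exactly the content of Theorem~\ref{Extheon}.

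\emph{Uniqueness for $\u=o(\log r)$.}
Let $\w\in W^{1,2}_{\rm loc}(\overline\Omega)$ solve $\div\C[\nabla\w]=\0$ in $\Omega$ with $\w_{\vert\partial\Omega}=\0$ and $\w=o(\log r)$; the goal is $\w\equiv\0$. I would fix $R_0$ so large that ${\Bbb R}^2\setminus\Omega\subset S_{R_0}$, so that $\w$ solves the constant--coefficient system on $\{|x|>R_0\}$ and admits there the classical asymptotic expansion at infinity in homogeneous (possibly $\log$--modified) modes. The hypothesis $\w=o(\log r)$ kills the logarithmic mode and every mode growing at least linearly, leaving $\w$ bounded with $\w-\w_\infty=O(r^{-1})$ and $\nabla\w=O(r^{-2})$ for a constant $\w_\infty$; in particular $\nabla\w\in L^2(\{|x|>2R_0\})$, and together with $\nabla\w\in L^2(\Omega\cap S_{2R_0})$ this yields $\nabla\w\in L^2(\Omega)$. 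Thus $\w$ is a $D$--solution of $(\ref{conell})_{1,2}$ with datum $\0$, and the uniqueness part of Theorem~\ref{Extheon} gives $\w\equiv\0$.

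\emph{Uniqueness for $\u=o(r)$, sharpness, and the main obstacle.}
For $\w$ as above but merely $\w=o(r)$, the same expansion now leaves a possible logarithmic mode: $\w=\boldsymbol\beta\log r+O(1)$ for some $\boldsymbol\beta\in{\Bbb R}^2$. The fields $\vi[{\bmit\psi}_j]-\vi[{\bmit\psi}_j]_{\vert\partial\Omega}$ $(j=1,2)$ vanish on $\partial\Omega$, are solutions of the homogeneous system, and have leading behaviour $(\boldsymbol{\bmit\Phi}_0\!\int_{\partial\Omega}{\bmit\psi}_j)\log r$; since $\boldsymbol{\bmit\Phi}_0$ is invertible and $\{\int_{\partial\Omega}{\bmit\psi}_1,\int_{\partial\Omega}{\bmit\psi}_2\}$ is a basis, so is $\{\boldsymbol{\bmit\Phi}_0\!\int_{\partial\Omega}{\bmit\psi}_1,\boldsymbol{\bmit\Phi}_0\!\int_{\partial\Omega}{\bmit\psi}_2\}$, and I would pick $c_1,c_2$ with $\boldsymbol\beta=\sum_j c_j\boldsymbol{\bmit\Phi}_0\!\int_{\partial\Omega}{\bmit\psi}_j$; then $\w-\sum_j c_j\big(\vi[{\bmit\psi}_j]-\vi[{\bmit\psi}_j]_{\vert\partial\Omega}\big)$ lies in the class $(\ref{DCFdb})$, vanishes on $\partial\Omega$ and solves the homogeneous system, hence is $\0$ by the previous step — this is the asserted uniqueness modulo $\vi[{\bmit\psi}']-\vi[{\bmit\psi}']_{\vert\partial\Omega}$. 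Sharpness of $(\ref{DCFdb})$ follows from the same fields: they are $O(\log r)$, vanish on $\partial\Omega$ and solve the homogeneous system, so uniqueness must fail once $O(\log r)$ growth is allowed. The delicate point is precisely this use of the exterior expansion and the dichotomy it produces at the borderline rate $\log r$ — below it the $D$--solution is the only solution, while at $\log r$ exactly the two--dimensional ``logarithmic'' family $\{\vi[{\bmit\psi}']-\vi[{\bmit\psi}']_{\vert\partial\Omega}:{\bmit\psi}'\in\boldsymbol{\frak C}\}$ enters; the non--degeneracy facts built into the single--layer representation (invertibility of $\boldsymbol{\bmit\Phi}_0$, and $\{\int_{\partial\Omega}{\bmit\psi}_j\}$ a basis) are what make the argument clean, and replacing the expansion by a direct energy estimate is awkward because in two dimensions the Dirichlet integral is only controlled up to a logarithmic loss.
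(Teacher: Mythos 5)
Your argument is correct and follows essentially the same route as the paper, which justifies Theorem \ref{Exxxxon} only through the simple--layer representation sketched immediately before its statement (the determination of ${\bmit\kappa}$ from $\int_{\partial\Omega}(\hat{\u}-{\bmit\kappa})\cdot{\bmit\psi}'=0$) and delegates the details and the uniqueness classes to \cite{RussoPG}, \cite{RSima}. Your reciprocity computation merely makes the determination of ${\bmit\kappa}$ explicit (note it relies on the standing symmetry assumption on ${\C}$), and your use of the classical exterior asymptotic expansion for the two uniqueness assertions is the standard argument underlying the cited results.
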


An immediate consequence of (\ref{DCCCob})  is nonexistence of a solution  to (\ref{conell})   corresponding to a constant boundary data. This phenomenon for the Stokes' equations
$$
\begin{array}{r@{}l}
\mu\Delta\u-\nabla p& {} ={\0} , \\[2pt]
\div{\u} & {} =0,
\end{array}
$$
is popular as {\it Stokes paradox\/} and goes back to the pioneering work of G.G. Stokes (1851) on the study of the (slow) translational motions of a ball in an incompressible viscous fluid of viscosity $\mu$ (see \cite{GS} and Ch. V of \cite{Galdi}). Clearly, as it stands, Stokes' paradox can be read only as a {\it negative result\/}, unless we are not able to find an analytic expression of  the densities of $\boldsymbol{\frak C}$. As far as we know, this is possible only for the ellipse of equation $f(\xi)=1$. Indeed, in this case it is known that $\boldsymbol{\frak C}=\hbox{\rm spn}\,\{ \e_1/|\nabla f|,\e_2/|\nabla f|\}$ (see, $e.g.$, \cite{Vassiliev})  and Theorem \ref{Exxxxon} reads
\begin{theorem}
\label{Esern}
Let $\Omega$ be the exterior of an ellipse  of equation $f(\xi)=1$ and let ${\C}$ be  
constant and strongly elliptic.  If
$\hat{\u}\in W^{1/2,2}(\partial\Omega)$, then $(\ref{conell})$ 
has a unique solution expressed by a simple layer potential, with a density ${\bmit\psi}\in W^{-1/2,2}(\partial\Omega)$ satisfying $(\ref{DAdob})$,
 if and only if 
 \begin{equation*}
%\label{Dxse4ob}
\into{\partial\Omega}  {\hat{\u}\over|\nabla f|}={\bf 0}. 
\end{equation*}
\end{theorem}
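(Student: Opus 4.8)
The plan is to derive Theorem~\ref{Esern} as the special case of Theorem~\ref{Exxxxon} in which $\Omega$ is the exterior of the ellipse $f(\xi)=1$, so the only real work is to identify the space $\boldsymbol{\frak C}$ explicitly. First I would recall from the cited literature (e.g.\ \cite{Vassiliev}) that for the homogeneous, strongly elliptic system $(\ref{conell})_1$ on the exterior of an ellipse the densities $\boldsymbol{\psi}$ for which the simple layer $\boldsymbol{v}[\boldsymbol{\psi}]$ is constant on $\partial\Omega$ are precisely the constant vectors divided by $|\nabla f|$; that is, $\boldsymbol{\frak C}=\hbox{\rm spn}\,\{\e_1/|\nabla f|,\e_2/|\nabla f|\}$. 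The key point here is that $|\nabla f|^{-1}$ is, up to a constant, the reciprocal of the Jacobian of the boundary parametrization, so that $\boldsymbol{\psi}_j=\e_j/|\nabla f|$ corresponds under the conformal map taking the exterior of the ellipse to the exterior of a disk to the \emph{constant} density on the circle, for which the single-layer potential of the fundamental solution is well known to be constant outside. This identification is the substantive input; everything else is bookkeeping.

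Next I would verify the two dimension/basis hypotheses implicit in Theorem~\ref{Exxxxon}: namely that $\boldsymbol{\frak C}$ has dimension two and that $\{\int_{\partial\Omega}\boldsymbol{\psi}_1,\int_{\partial\Omega}\boldsymbol{\psi}_2\}$ is a basis of ${\Bbb R}^2$. For $\boldsymbol{\psi}_j=\e_j/|\nabla f|$ this is immediate: these two fields are obviously linearly independent in $L^2(\partial\Omega)$, and $\int_{\partial\Omega}\e_j/|\nabla f|\,ds$ is a positive multiple of $\e_j$ (the integrand is a positive scalar times a fixed unit vector), so the two integrals are linearly independent in ${\Bbb R}^2$. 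Hence $\boldsymbol{\frak C}$ is exactly the two-dimensional space claimed, and the abstract compatibility condition of Theorem~\ref{Exxxxon}, namely $\int_{\partial\Omega}\hat{\u}\cdot\boldsymbol{\psi}'=0$ for all $\boldsymbol{\psi}'\in\boldsymbol{\frak C}$, becomes $\int_{\partial\Omega}\hat{\u}\cdot(\e_j/|\nabla f|)=0$ for $j=1,2$, i.e.\ $\int_{\partial\Omega}\hat{\u}/|\nabla f|=\0$.

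Finally I would assemble the statement. By Theorem~\ref{Exxxxon}, under $\hat{\u}\in W^{1/2,2}(\partial\Omega)$ and $\C$ constant and strongly elliptic, problem $(\ref{conell})$ has a unique D--solution — which, as recalled in the discussion preceding Theorem~\ref{Exxxxon}, is represented as a simple layer potential $\u=\boldsymbol{v}[\boldsymbol{\psi}]+\boldsymbol{\kappa}$ with $\boldsymbol{\psi}\in W^{-1/2,2}(\partial\Omega)$ satisfying $(\ref{DAdob})$ — if and only if the compatibility condition $(\ref{DCCCob})$ holds; and with the explicit $\boldsymbol{\frak C}$ just computed, $(\ref{DCCCob})$ reads exactly $\int_{\partial\Omega}\hat{\u}/|\nabla f|=\0$. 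The only genuine obstacle is justifying the explicit form of $\boldsymbol{\frak C}$ for the ellipse; once the conformal-mapping/Vassiliev computation is invoked, the rest is a direct specialization of the already-proved Theorem~\ref{Exxxxon}, and the claim follows.
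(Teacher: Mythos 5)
Your proposal matches the paper's treatment: Theorem~\ref{Esern} is obtained exactly by citing the known identification $\boldsymbol{\frak C}=\hbox{\rm spn}\,\{\e_1/|\nabla f|,\e_2/|\nabla f|\}$ for the ellipse (via \cite{Vassiliev}) and then specializing the compatibility condition $(\ref{DCCCob})$ of Theorem~\ref{Exxxxon}, which is all the paper does. One small caveat: your conformal-mapping aside is a clean heuristic for the logarithmic (Laplace) kernel but does not by itself justify the result for a general strongly elliptic constant $\C$, so the Picard--Lefschetz citation should be treated as the actual input, as you ultimately do.
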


The situation is not so clear in inhomogeneous elasticity. In fact, in such a case  it is
not known whether
$\u$ converges at infinity and  even the definition of  the space $\boldsymbol{\frak C} $  needs to be clearified.  

The purpose of this paper is to show that results similar to those stated in Theorem \ref{Exxxxon}  hold in inhomogeneous elasticity, at least in its negative meaning.

By $\boldsymbol{\frak M}$ we shall denote the   linear space  of variational solutions to
\begin{equation*}
%\label{xsslcon0}
\begin{array}{r@{}l}
\div{\C}[\nabla \h]& {} ={\0}\!\qquad\quad\hbox{\rm in }\Omega, \\[2pt]
  \h &{} ={\0}\!\qquad\quad\hbox{\rm on }\partial\Omega,
\\[2pt]
\h &{}\in BMO. 
\end{array}  
\end{equation*}

We say that \hbox{\it $\Ci$ is regular at infinity\/} if there is a constant elasticity tensor $\Ci_0$ such that 
\begin{equation}
\label{lkj˜jkskj9}
\lim_{|x|\to+\infty} {\C}(x) ={\C}_0.
\end{equation}

\medskip

The following theorem holds.

\begin{theorem} 
\label{T1}{\rm (Stokes' Paradox of inhomogeneous elastostatics)} -- 
Let $\Omega$ be an exterior Lipschitz  domain of ${\Bbb R}^2$ and  let  
${\C}$ satisfy $(\ref{Sxxa})$. It holds:

\medskip

$(i)$  $\dim\boldsymbol{\frak M}=2$ and if  $\{\h_1,\h_2\}$ is a basis of $\boldsymbol{\frak M}$, then $\Big\{\into {\partial\Omega}\s(\h_1), \into {\partial\Omega}\s(\h_2)\Big\}$ is a basis of ${\Bbb R}^2$.

\medskip

$(ii)$  If 
$\hat{\u}\in W^{1/2,2}(\partial\Omega)$, then system
$(\ref{conell}) $ has a unique D--solution $\u$ if and only if 
\begin{equation}
\label{defu0}
\into {\partial\Omega}\hat{\u}\cdot\s(\h)={0},\quad\forall\, \h\in\boldsymbol{\frak M}.
\end{equation}

\medskip

$(iii)$ $\u$ is unique in the class $(\ref{DCFdb})$ and modulo a field  $\h\in\boldsymbol{\frak M}$ in the class 
  \begin{equation*}
%\label{DCFdb2}
 \{\u\in W^{1,2}_{\rm loc}(\overline\Omega): \u=o(r^{\gamma/2})\},
\end{equation*}
where
 \begin{equation*}
%\label{gamma}
\gamma={4\mu_0\over 5\mu_0+8\mu_e}
\end{equation*}

\medskip

$(iv)$  there is a positive $\alpha$ depending on the elasticities such that
\begin{equation}
\label{desss1}
\u=O(r^{-\alpha})
\end{equation}
Moreover, if  $\Ci$ is regular at infinity
 then $(\ref{desss1})$ holds for all $\alpha<1$.
 
\end{theorem}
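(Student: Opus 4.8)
The proof of Theorem \ref{T1} naturally splits along its four parts, and the architecture I would adopt is the following.

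\medskip

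\emph{Step 1: Construction of $\boldsymbol{\frak M}$ and part $(i)$.} Fix $R_0$ so large that $\partial\Omega\subset B_{R_0}$ and solve, for each $\boldsymbol{e}\in\{\e_1,\e_2\}$, the exterior displacement problem with boundary datum $\boldsymbol{e}$ on $\partial\Omega$ by the methods of Theorem \ref{Extheon} applied in the ball-shaped approximating domains; the key is a uniform $\log$-type a priori bound so that the sequence of approximating solutions converges to a field $\h$ with $\div\C[\nabla\h]=\0$, $\h=\boldsymbol{e}$ on $\partial\Omega$, and $\h\in BMO$. This forces $\dim\boldsymbol{\frak M}\ge 2$. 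To get $\dim\boldsymbol{\frak M}\le 2$ I would argue that any $\h\in\boldsymbol{\frak M}$ is determined by the constant vector it approaches at infinity: if $\h$ had no limit, the BMO bound together with a Caccioppoli inequality on dyadic annuli and the Korn–Poincar\'e inequality would produce a finite Dirichlet integral, whence $\h$ is a $D$-solution vanishing at infinity with zero boundary data, so $\h=\0$ by the uniqueness in Theorem \ref{Extheon}. The statement that $\{\int_{\partial\Omega}\s(\h_1),\int_{\partial\Omega}\s(\h_2)\}$ is a basis of ${\Bbb R}^2$ follows from the work and energy relation: if some nontrivial combination $\h$ had $\int_{\partial\Omega}\s(\h)=\0$, then pairing $\s(\h)$ against the constant $\boldsymbol{c}=\lim\h$ in the generalized work–energy identity on $\Omega_R$ and letting $R\to\infty$ (the annular term dies because $\h$ is bounded and $\s(\h)=O(r^{-1})$) would give $\int_\Omega\nabla\h\cdot\C[\nabla\h]=\boldsymbol{c}\cdot\int_{\partial\Omega}\s(\h)=0$, so $\h$ is constant, hence $\h=\0$.

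\medskip

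\emph{Step 2: Parts $(ii)$ and $(iii)$ via duality.} For the "only if" direction of $(ii)$, suppose $\u$ is a $D$-solution; then $\u$ is bounded (indeed $\u\to\0$, which is part of the $D$-solution notion once decay is established in Step 3) and the work–energy relation paired against any $\h\in\boldsymbol{\frak M}$ — using that $\h$ and $\u$ solve the same homogeneous system so that the bilinear form $\int_{\Omega_R}(\nabla\h\cdot\C[\nabla\u]-\nabla\u\cdot\C[\nabla\h])$ reduces to boundary terms on $\partial\Omega$ and $\partial S_R$ — yields $\int_{\partial\Omega}\hat\u\cdot\s(\h)=\int_{\partial\Omega}\h\cdot\s(\u)+(\text{annular term})=0+o(1)$, the annular term vanishing because $\u=o(\log r)$ (really $O(r^{-\alpha})$) and $\h=O(\log r)$ against $\s(\cdot)=O(r^{-1})$. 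For the "if" direction I would take the $D$-solution $\u_0$ of $(\ref{conell})_{1,2}$ guaranteed by Theorem \ref{Extheon}, which a priori need only tend to some constant $\boldsymbol{\kappa}$; writing $\u=\u_0-\boldsymbol{\kappa}$ one checks $\u$ solves $(\ref{conell})_{1,2}$ with datum $\hat\u-\boldsymbol{\kappa}$, and the compatibility condition $(\ref{defu0})$ together with the just-derived identity forces the ``defect'' $\boldsymbol{\kappa}$ to pair to zero against the basis $\{\int_{\partial\Omega}\s(\h_j)\}$ of ${\Bbb R}^2$, hence $\boldsymbol{\kappa}=\0$. Uniqueness in the class $(\ref{DCFdb})$: the difference $\w$ of two such solutions is a variational solution with zero boundary data and $\w=o(\log r)$; a logarithmic cutoff argument (test the equation with $\phi_R\w$, $\phi_R$ supported in $B_{R^2}\setminus B_R$ and logarithmically decaying) kills the Dirichlet integral, so $\w\in\boldsymbol{\frak M}$, but $\w=o(\log r)$ and $\w=\0$ on $\partial\Omega$ force $\w=\0$ by Step 1's characterization; the $o(r^{\gamma/2})$ statement is the same argument with the sharper cutoff exponent $\gamma/2$ coming from the Korn constant in the exterior annulus, where the factor $\mu_0/(5\mu_0+8\mu_e)$ enters.

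\medskip

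\emph{Step 3: Part $(iv)$ — the decay rate, the main obstacle.} This is the technical heart. On the annulus $A_R=B_{2R}\setminus B_R$ ($R\ge 2R_0$) I would establish a Caccioppoli-type inequality $\int_{A_R}|\nabla\u|^2\le C R^{-2}\int_{A_{R/2,2R}}|\u-\overline\u_R|^2$ with $C$ depending only on $\mu_0,\mu_e$ (through ellipticity and the Korn inequality on annuli, whose constant is scale-invariant), combine it with Poincar\'e on the annulus, and feed the result into a hole-filling / Widman iteration to obtain $\int_{A_R}|\nabla\u|^2\le C(R_0/R)^{2\alpha}$ for some $\alpha=\alpha(\mu_0,\mu_e)>0$; Campanato/Morrey embedding on annuli then upgrades this to the pointwise bound $(\ref{desss1})$, after subtracting the (necessarily zero, by Step 2 applied in reverse) limiting constant. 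When $\C$ is regular at infinity, $(\ref{lkj...})$, the idea is a perturbation/freezing argument: comparing $\u$ on $A_R$ with the solution of the constant-coefficient system $\div\C_0[\nabla\cdot]=\0$ having the same Dirichlet data on $\partial A_R$, for which the homogeneous theory (explicit via the fundamental solution $\boldsymbol{\mathscr U}$ and the fact that the compatibility $(\ref{defu0})$ now holds) gives decay $O(r^{-1})$, and controlling the error $\|\C-\C_0\|_{L^\infty(A_R)}=o(1)$ via a bootstrap that improves the exponent up to any $\alpha<1$ — one loses exactly an $\varepsilon$ because the error term in the iteration carries a $o(1)$ but not a summable rate unless one quantifies $(\ref{lkj...})$. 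The principal difficulty, and the place where the constants $\mu_0,\mu_e$ really matter, is the Korn inequality on the exterior domain / on annuli with scale-independent constant: one must handle the rigid-displacement kernel carefully (only translations survive in the exterior setting, rotations being excluded by the decay), and track how the Korn constant degrades the hole-filling ratio, which is precisely what produces the explicit $\gamma=4\mu_0/(5\mu_0+8\mu_e)$ in part $(iii)$.
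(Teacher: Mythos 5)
Your outline for parts $(ii)$--$(iv)$ is broadly in the spirit of the paper (reciprocity pairing against $\boldsymbol{\frak M}$ for the compatibility condition, energy decay plus Campanato embedding for the rate, a freezing argument when $\C$ is regular at infinity), but part $(i)$ — the construction of $\boldsymbol{\frak M}$ and the proof that $\dim\boldsymbol{\frak M}=2$ with $\{\int_{\partial\Omega}\s(\h_i)\}$ a basis of ${\Bbb R}^2$ — contains a genuine gap, and it is the part on which everything else rests. First, the fields you construct satisfy $\h=\e_i$ on $\partial\Omega$, whereas $\boldsymbol{\frak M}$ consists by definition of solutions \emph{vanishing} on $\partial\Omega$; your approximation scheme in expanding balls with constant boundary datum converges (if normalized in the natural way) to the bounded $D$--solution with datum $\e_i$, which is not in $\boldsymbol{\frak M}$ and gives no lower bound on its dimension. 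Second, and more seriously, your arguments for $\dim\boldsymbol{\frak M}\le 2$ and for the basis property assume that elements of $\boldsymbol{\frak M}$ are bounded and converge to constants at infinity, with $\s(\h)=O(r^{-1})$. This is false: a nonzero $\h\in\boldsymbol{\frak M}$ \emph{cannot} be bounded, since boundedness plus the Caccioppoli inequality (whose boundary terms vanish because $\h=\0$ on $\partial\Omega$) would give $\nabla\h\in L^2$, making $\h$ a $D$--solution with zero boundary data, hence $\h=\0$ by Theorem \ref{Extheon}. The nonzero elements of $\boldsymbol{\frak M}$ grow like $\log r$ — this is precisely why the definition uses $BMO$ rather than $L^\infty$, and is the analytic content of the paradox. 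Consequently the map ``$\h\mapsto$ its limit at infinity'' that you use to bound the dimension is not defined, and your claim that the annular term in the work–energy identity ``dies because $\h$ is bounded'' fails.

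The missing idea is the fundamental (Green) function: extend $\C$ inside the obstacle, let ${\bmit G}(x,o)$ be the Green matrix with pole at an interior point $o$ of $\complement\Omega$, note that ${\bmit G}(\cdot,o)\e_i$ is a solution in $\Omega$, lies in $BMO$, grows logarithmically, and — crucially — does \emph{not} belong to $D^{1,2}$ near infinity; then set $\h_i=\u_i+{\bmit G}(\cdot,o)\e_i$ where $\u_i$ is the $D$--solution with datum $-{\bmit G}(\cdot,o)\e_i$. Linear independence follows from ${\bmit G}(\cdot,o)\e\notin D^{1,2}(\complement S_R)$, giving $\dim\boldsymbol{\frak M}\ge 2$. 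The upper bound and the basis property both come from the single implication $\h\in\boldsymbol{\frak M}$, $\int_{\partial\Omega}\s(\h)=\0\Rightarrow\h=\0$: the vanishing of the traction resultant is exactly what lets one replace $\h$ by $\h-\h_{S_R}$ in the Caccioppoli inequality, after which the $BMO$ seminorm bounds the Dirichlet integral uniformly in $R$ and uniqueness of $D$--solutions applies. A related (fixable) weakness runs through your Step 2: for merely bounded measurable $\C$ one has no pointwise decay of $\nabla\u$ or $\nabla\h$, so the annular reciprocity terms must be estimated by H\"older with exponents $s<2<s'$, Hardy's inequality, and the $D^{1,q}$ integrability of $\nabla\u$ and $\nabla{\bmit G}$ for $q$ near $2$, not by $\s(\cdot)=O(r^{-1})$.
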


Clearly,  $(i)-(ii)$   imply in particular that if $\hat{\u}$ is constant, then (\ref{conell})
has no solution in $D^{1,2}(\Omega)$ (Stokes' paradox).

Also,  for more particular tensor $\C$ we prove 
\begin{theorem} 
\label{T3}
Let $\Omega$ be an exterior Lipschitz  domain of $\,{\Bbb R}^2$ and let ${\C}:\Omega\times{\rm Lin}\to {\rm Lin}$ satisfies 
\begin{equation}
\label{SEaabb}
 \lambda |{\bmit E}|^2\le {\bmit E}\cdot\C[{\bmit E}]\le \Lambda|{\bmit E}|^2,\quad \forall\,{\bmit E} \in\hbox{\rm Lin}.
\end{equation}
A variational solution  to the system
\begin{equation*}
%\label{conellui}
\begin{array}{r@{}l}
\div\C[\nabla{\u}] & {} ={\0}\quad\hbox{\rm in }\Omega, \\[2pt]
{\u} & {} =\hat{\u} \quad\hbox{\rm on }\partial\Omega,  \end{array}  
\end{equation*}
 is unique in the class
 \begin{equation}
 \label{CCCEX}
\{\u: \u=o(r^{1/\sqrt L})\}\setminus{\frak M},\quad L=\Lambda/\lambda,
 \end{equation} 
and if $\u$ belongs to $D^{1,2}(\complement S_{R_0})$, then  
$$
\u-\u_0=O(r^{\epsilon-1/\sqrt L}),
$$ 
for all positive $\epsilon$, where $\u_0$ is the constant vector defined by
\begin{equation*}
% \label{CxxX}
\into{\partial\Omega}(\hat{\u}-\u_0)\cdot\s(\h)=0,\quad\forall\h\in{\frak M}.
 \end{equation*} 
\end{theorem}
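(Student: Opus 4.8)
The plan is to derive both assertions from a single differential inequality for the Dirichlet energy on large balls — the device already behind Theorem~\ref{T1}$(iii)$; the role of $(\ref{SEaabb})$ is that coercivity on the whole of $\mathrm{Lin}$, rather than just on $\mathrm{Sym}$, lets one dispense with Korn's inequality and so replace the exponent $\gamma/2$ by the sharp $1/\sqrt L$.

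First I would reduce to a field $\w$ that vanishes on $\partial\Omega$ and carries no resultant traction across circles. For uniqueness, if $\u_1,\u_2$ lie in the class $(\ref{CCCEX})$ then $\w=\u_1-\u_2$ solves $\div\C[\nabla\w]=\0$, vanishes on $\partial\Omega$ and is $o(r^{1/\sqrt L})$; letting $\mathsf f=\int_{\partial\Omega}\s(\w)$ and picking, by Theorem~\ref{T1}$(i)$, the unique $\h\in\boldsymbol{\frak M}$ with $\int_{\partial\Omega}\s(\h)=\mathsf f$, the field $\vi=\w-\h$ still solves the equation, still vanishes on $\partial\Omega$, is still $o(r^{1/\sqrt L})$ (since $\h\in BMO$), and has $\int_{\partial\Omega}\s(\vi)=\0$; it remains to show $\vi\equiv\0$. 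For the decay statement, a field of finite Dirichlet integral in a two–dimensional exterior domain has a limit $\boldsymbol\ell$ at infinity, and one identifies $\boldsymbol\ell=\u_0$ by applying Green's identity on $\Omega_R$ to an arbitrary $\h\in\boldsymbol{\frak M}$, which gives $\int_{\partial\Omega}(\hat\u-\boldsymbol\ell)\cdot\s(\h)=\int_{\partial S_R}\big(\h\cdot\s(\w)-\w\cdot\s(\h)\big)$ with $\w=\u-\boldsymbol\ell$; the right–hand side tends to $0$ as $R\to+\infty$ ($\w\to\0$, the resultant of $\s(\w)$ across $\partial S_R$ is zero by the divergence theorem, and $\h,\s(\h)$ are under control through the $BMO$ bound), so $\boldsymbol\ell=\u_0$ by Theorem~\ref{T1}$(i)$. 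In either case one is left with a solution $\w$, null on $\partial\Omega$ (resp.\ of finite Dirichlet integral near infinity), with $\int_{\partial S_R}\s(\w)=\0$.

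Put $D(R)=\int_{\Omega\cap S_R}\nabla\w\cdot\C[\nabla\w]$ in the first case and $D(R)=\int_{\Omega\setminus S_R}\nabla\w\cdot\C[\nabla\w]$ in the second. The work and energy relation, together with $\w=\0$ on $\partial\Omega$ (resp.\ $\w\to\0$ at infinity), gives $D(R)=\pm\int_{\partial S_R}\w\cdot\s(\w)$, and since $\int_{\partial S_R}\s(\w)=\0$ one may subtract the circular mean $\langle\w\rangle_R$ to get $D(R)=\pm\int_{\partial S_R}(\w-\langle\w\rangle_R)\cdot\s(\w)$. Cauchy–Schwarz in the energy metric of $\C$, Wirtinger's inequality on $\partial S_R$ (which bounds $\int_{\partial S_R}|\w-\langle\w\rangle_R|^2$ by $R^2$ times the integral over $\partial S_R$ of the \emph{tangential} part of the energy density), and the bounds $(\ref{SEaabb})$ then yield a differential inequality $D(R)\le\tfrac12\sqrt L\,R\,|D'(R)|$, so that $R^{\mp 2/\sqrt L}D(R)$ is monotone. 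Consequently $\w$ is constant — hence $\w\equiv\0$ in the first case, and in the second the conclusion is trivial — or else $D(R)\ge cR^{2/\sqrt L}$ in the first case, $D(R)\le cR^{-2/\sqrt L}$ in the second.

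To finish the uniqueness, the hypothesis $\vi=o(r^{1/\sqrt L})$ forces $D(R)=o(R^{2/\sqrt L})$ — apply Caccioppoli's inequality on dyadic annuli and sum the resulting geometric series — contradicting $D(R)\ge cR^{2/\sqrt L}$ unless $D\equiv\0$; hence $\vi\equiv\0$ and $\w=\h\in\boldsymbol{\frak M}$, which is $(\ref{CCCEX})$. To finish the decay, the bound $\int_{\Omega\setminus S_R}|\nabla\w|^2=O(R^{-2/\sqrt L})$ is upgraded to $\w=O(r^{\epsilon-1/\sqrt L})$ by a Caccioppoli estimate on annuli together with the standard interior estimates for solutions of two–dimensional elliptic systems, the loss $\epsilon$ absorbing the logarithmic factors of the iteration. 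I expect the crux to be the sharp constant $\tfrac12\sqrt L$ in the differential inequality, equivalently the sharp comparison between $\int_{\partial S_R}\w\cdot\s(\w)$ and $R\int_{\partial S_R}\nabla\w\cdot\C[\nabla\w]$: for $\C$ the identity on $\mathrm{Lin}$ the factor $\tfrac12$ is immediate, since $\e_r\perp\e_\theta$ annihilates the radial–tangential cross terms and an arithmetic–geometric mean does the rest, whereas for a general $\C$ subject only to $(\ref{SEaabb})$ those cross terms survive and must be absorbed — and it is precisely the gain of this factor $\tfrac12$ that turns the crude thresholds $o(r^{1/(2\sqrt L)})$ and $r^{\epsilon-1/(2\sqrt L)}$ into the asserted $o(r^{1/\sqrt L})$ and $r^{\epsilon-1/\sqrt L}$.
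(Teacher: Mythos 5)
Your route is essentially the paper's: the paper simply invokes Piccinini--Spagnolo \cite{PS} for the sharp Morrey decay
$\into{S_\rho(x)}|\nabla\u|^2\le c(\rho/R)^{2/\sqrt L}\into{S_R(x)}|\nabla\u|^2$ --- which is exactly your differential inequality $D\le\tfrac12\sqrt L\,R\,D'$ after integration, the point being precisely that coercivity on all of ${\rm Lin}$ removes Korn's inequality and the $\div\u$ bookkeeping from the proof of Lemma \ref{Lem1} --- and then ``repeats the steps in the proof of Theorem \ref{T1}'' with $\gamma$ replaced by $2/\sqrt L$. Your normalisation by the element of $\boldsymbol{\frak M}$ with matching traction resultant, the Caccioppoli estimate on dyadic annuli, and the Campanato-type upgrade to pointwise decay are the same steps as in parts $(ii)$--$(iv)$ of that proof.

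There is, however, one step that fails as stated: a field of finite Dirichlet integral in a two--dimensional exterior domain need \emph{not} have a limit at infinity ($\varphi=\log\log r$ has $\nabla\varphi\in L^2(\complement S_{R_0})$ and is unbounded), so you cannot start the decay argument from the bare hypothesis $\u\in D^{1,2}(\complement S_{R_0})$. The existence of the limit $\u_0$ must be obtained, as in the paper, from the improved integrability $\nabla\u\in L^{q}(\complement S_{R_0})$ for some $q<2$ (Lemma \ref{Lem3}, valid here with $\gamma$ replaced by $2/\sqrt L$) combined with Hardy's inequality (Lemma \ref{Inequal}). The same improved integrability is also what makes your flux identity usable: on a single circle $\partial S_R$ the term $\into{\partial S_R}\bigl(\h\cdot\s(\w)-\w\cdot\s(\h)\bigr)$ is not controlled by the $BMO$ bound on $\h$ alone; as in the proof of Theorem \ref{T1}$(ii)$ it should be smeared over the annulus $T_R$ with the cut-off $g_R$ and estimated via H\"older with conjugate exponents $s<2<s'$, using $\nabla\u\in L^{s}$ and $\nabla\h\in L^{s'}$ (property (\i\i) of the Green function). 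With these two repairs --- both supplied by the paper's own lemmas --- your argument is complete.
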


Theorem \ref{T1}, \ref{T3} are  proved in section \ref{PT4}.  In section \ref{kkklo} we collect the
main  tools we shall need to prove them and in Section \ref{DeGC} by means of   counter--examples we observe that  our results are sharp; for instance, in Theorem \ref{T3} uniqueness fails in the class defined by (\ref{CCCEX}) with $O$ instead  of $o$.
 
 \medskip
 {\sc Notation} --  Unless otherwise specified, we will essentially use  the notation of the classical
monograph \cite{Gurtin} of M.E. Gurtin. In indicial notation $(\div{\C}[\nabla \u])_i = \partial_j(\Cc_{ijhk}\partial_k u_h)$. Lin is  the space of second--order tensors
 (linear maps from ${\Bbb R}^2$ into itself) and Sym, Skw are the spaces of the symmetric and skew elements of Lin respectively. As is customary, if ${\bmit E}\in{\rm Lin}$ and ${\bmit v}\in{\Bbb R}^2$, ${\bmit E}{\bmit v}$ is the vector with components $E_{ij}v_j$ and $\hat\nabla\u$, $\tilde\nabla\u$ denote respectively the symmetric and skew parts of $\nabla\u$.
$(o,(\e_i\}_{i=1,2})$, $o\in\Omega'$, is the standard orthonormal reference frame; $\x=x-o$, $r=|\x|$, 
$S_R =\{x\in{\Bbb R}^2:r<R\}$, $T_R =S_{2R} \setminus S_R $;
$\Omega_R=\Omega\cap S_R$; 
$R_0$ is a large positive constant such that
$S_{R_0}\supset\overline{\Omega'}$; $\e_r=\x/r$, for all $x\ne o$.
 $W^{k,q}(\Omega)$ ($k\in{\Bbb N}_0$, $q\in (1,+\infty)$)  denotes the ordinary Sobolev's
space \cite{Giusti};
$W^{k,q}_{\rm loc}(\Omega)$ and $W^{k,q}_{\rm loc}(\overline\Omega)$ are  the spaces of all
$\varphi\in W^{k,q}(K)$ such that $\varphi\in W^{k,q}_{\rm loc}(K)$   for every compact
 $K\subset \Omega$ and  $K\subset \overline \Omega$ respectively.
$W^{1-1/q,q}(\partial\Omega)$ is the trace space of  $D^{1,q}(\Omega)=\{\varphi\in L^1_{\rm
loc}(\Omega):\|\nabla{ \varphi}\|_{L^q(\Omega)}<+\infty\}$ $(q>1)$  and
$W^{-1/q,q}(\partial\Omega)$ is its dual space. $BMO=BMO({\Bbb R}^2)=\{\varphi\in L^1_{\rm
loc}({\Bbb R}^2): \sup_R {1\over {R^2}} \int_{S_R}|\varphi-\varphi_{S_R}|<+\infty\}$. ${\mathcal H}^1({\Bbb R}^2)$ is the Hardy space. 
 As is usual, if $f(x)$ and
$\phi(r)$ are functions defined in a neighborhoof of infinity $\complement S_{R_0}$, then
$f(x)=o(\phi(r))$ and
$f(x)=O(\phi(r))$ mean respectively that $\lim_{r\to+\infty}(f/g)=0$ and    $f/g$ is bounded
in $\complement S_{R_0}$
To alleviate notation, we do not distinguish between
scalar, vector and second--order tensor space functions;  $c$ will denote a positive constant whose
numerical value is not essential to our purposes; also we let $c(\epsilon)$ denote a positive function of $\epsilon>0$ such that $\lim_{\epsilon\to0^+}c(\epsilon)=0$.

\section{Preliminary results}
\label{kkklo}
Let us collect the   main tools we shall need to prove Theorem \ref{T1} and \ref {T3}  and that have some interest in themselves.
By ${\mathcal I}$ we shall denote the exterior of a ball $S_{R_0}\Supset\complement\Omega$.
\begin{lemma}
 \label{Inequal}
 {\rm \cite{DL} \cite{KO}}
Let $\u\in D^{1,q}({\mathcal I})$, $q\in  (1,+\infty)$. If $q>2 $ then $\u/r\in L^q({\mathcal I})$ and if $q<2$, then there is a constant vector $\u_0$ such that
\begin{equation*}
%\label{Hardi1}
\into {{\mathcal I}}{|\u-\u_0|^q\over r^q}\le c\into {\mathcal I}|\nabla\u|^q\quad\hbox{\it Hardy's inequality}.
\end{equation*}
Moreover, if $\u\in D^{1,q}({\mathcal I})$ for all $q$ in  a neighborhood of 2, then $\u=\u_0+o(1)$.
 \end{lemma}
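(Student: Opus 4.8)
The plan is to pass to polar coordinates and reduce the two displayed assertions to one–dimensional weighted Hardy inequalities, and then to obtain the final $o(1)$ statement by a Morrey–type argument on the dyadic annuli $T_R$.

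\textbf{The case $q>2$.} For almost every $\omega\in\partial S_1$ the map $\rho\mapsto\u(\rho\omega)$ is absolutely continuous on $(R_0,+\infty)$, so that
$$
\u(r\omega)=\u(R_0\omega)+\int_{R_0}^{r}\partial_\rho\u(\rho\omega)\,d\rho .
$$
Writing $\int_{\mathcal I}|\u|^q r^{-q}\,dx=\int_{\partial S_1}\int_{R_0}^{+\infty}|\u(r\omega)|^q r^{1-q}\,dr\,d\omega$, the contribution of $\u(R_0\omega)$ is finite because $\int_{R_0}^{+\infty}r^{1-q}\,dr<+\infty$ for $q>2$ and $\u$ restricted to $\partial S_{R_0}$ lies in $L^q(\partial S_{R_0})$ by the trace theorem applied on $T_{R_0}$. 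The remaining contribution is estimated, for a.e.\ fixed $\omega$, by the sharp weighted Hardy inequality
$$
\int_{R_0}^{+\infty}r^{1-q}\Big(\int_{R_0}^{r}f\,d\rho\Big)^{q}dr\le c\int_{R_0}^{+\infty}r\,f^{q}\,dr ,
$$
which is valid precisely for $q>2$, with $f(\rho)=|\partial_\rho\u(\rho\omega)|$; integrating in $\omega$ and using $\int_{\mathcal I}|\partial_r\u|^q\le\int_{\mathcal I}|\nabla\u|^q$ gives $\u/r\in L^q(\mathcal I)$.

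\textbf{The case $q<2$ and the constant $\u_0$.} A nonzero constant is not in $L^q(\mathcal I,r^{-q}dx)$ when $q<2$, so a constant must be subtracted. I would set $\u_0=\lim_{R\to+\infty}\frac1{|T_R|}\int_{T_R}\u$; existence of the limit follows from the scaled Poincar\'e inequality on the annulus $S_{4R}\setminus S_R$, which yields $\big|\frac1{|T_{2R}|}\int_{T_{2R}}\u-\frac1{|T_R|}\int_{T_R}\u\big|\le cR^{1-2/q}\|\nabla\u\|_{L^q(S_{4R}\setminus S_R)}$, a dyadically summable bound because $1-2/q<0$. Once one knows that $\frac1{|T_R|}\int_{T_R}(\u-\u_0)\to{\0}$ (hence, along a subsequence of radii depending on $\omega$, $(\u-\u_0)(R_j\omega)\to{\0}$ for a.e.\ $\omega$), the identity $(\u-\u_0)(r\omega)=-\int_r^{+\infty}\partial_\rho\u(\rho\omega)\,d\rho$ holds for a.e.\ $\omega$, and the dual weighted Hardy inequality $\int_{R_0}^{+\infty}r^{1-q}\big(\int_r^{+\infty}f\,d\rho\big)^{q}dr\le c\int_{R_0}^{+\infty}r\,f^{q}\,dr$, valid for $q<2$, gives, after integration in $\omega$, the stated Hardy inequality for $\u-\u_0$.

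\textbf{The $o(1)$ statement.} Suppose now $\u\in D^{1,q}(\mathcal I)$ for every $q$ in an interval $(q_1,q_2)$ with $q_1<2<q_2$, and put $\w=\u-\u_0$ with $\u_0$ as above. The previous step with exponent $q_1$ gives $\int_{\mathcal I}|\w|^{q_1}r^{-q_1}\,dx<+\infty$, whence $\frac1{|T_R|}\int_{T_R}|\w|^{q_1}\le cR^{q_1-2}\int_{T_R}|\w|^{q_1}r^{-q_1}\to 0$, so the annular means of $\w$ tend to ${\0}$. For the oscillation, fix $s\in(2,q_2]$; rescaling $T_R$ to $T_1$ and combining Poincar\'e with the Morrey embedding $W^{1,s}(T_1)\hookrightarrow C^{0,1-2/s}(T_1)$ yields $\operatorname{osc}_{T_R}\w\le cR^{1-2/s}\big(\int_{T_R}|\nabla\w|^{s}\big)^{1/s}$. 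Choosing $s=s(R)\downarrow 2$ slowly enough that $R^{1-2/s(R)}$ stays bounded, and observing that for $s\in[q_1,q_2]$ the quantity $\int_{T_R}|\nabla\w|^{s}$ is dominated, on splitting according to whether $|\nabla\w|\le 1$ or $|\nabla\w|>1$, by $\int_{\complement S_R}\big(|\nabla\w|^{q_1}+|\nabla\w|^{q_2}\big)$, which tends to $0$ as $R\to+\infty$, we obtain $\|\w\|_{L^\infty(T_R)}\le \operatorname{osc}_{T_R}\w+\big|\frac1{|T_R|}\int_{T_R}\w\big|\to 0$, i.e.\ $\u=\u_0+o(1)$.

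The manipulations in the first two cases are routine once the correct (respectively standard and dual) weighted Hardy inequalities have been identified, and the existence of $\u_0$ is a soft consequence of Poincar\'e's inequality on annuli. The only genuinely delicate point is the last step: the Sobolev constant on $T_R$ carries a nontrivial power of $R$, and it is precisely the integrability of $\nabla\u$ on \emph{both} sides of the critical exponent $2$ that permits sending that power to a harmless limit while the tails of the Dirichlet–type integrals go to zero.
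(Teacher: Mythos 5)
The paper offers no proof of this lemma --- it is quoted from \cite{DL} and \cite{KO} --- so your argument has to stand on its own. The two Hardy-type inequalities are handled essentially in the standard way and are sound: the Muckenhoupt conditions for your two one--dimensional weighted inequalities are exactly $q>2$ and $q<2$, and the existence of $\u_0$ via the dyadically summable Poincar\'e bound is fine. One step in the $q<2$ case is, however, glossed over: the inference from ``the annular means of $\u-\u_0$ tend to $\0$'' to ``$(\u-\u_0)(R_j\omega)\to\0$ for a.e.\ $\omega$ along a subsequence'' is not valid (convergence of means carries no pointwise information). What the classical proof actually supplies is (a) the existence a.e.\ of the radial limit $\ell(\omega)=\lim_{\rho\to\infty}\u(\rho\omega)$, from H\"older's inequality $\int_r^{\infty}|\partial_\rho\u|\,d\rho\le c\,r^{(q-2)/q}\bigl(\int_r^{\infty}|\partial_\rho\u|^q\rho\,d\rho\bigr)^{1/q}$ valid precisely for $q<2$, and (b) the constancy of $\ell$, obtained from $\operatorname{osc}_{\partial S_\rho}\u\le c\bigl(\rho^{\,q-2}\cdot\rho^2\int_0^{2\pi}|\nabla\u(\rho\cdot)|^q\,d\theta\bigr)^{1/q}$ along a subsequence of radii on which $\rho^2\int_0^{2\pi}|\nabla\u(\rho\cdot)|^q\,d\theta\to0$. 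Point (b) is not cosmetic: it is exactly what allows a single constant $\u_0$ to work in the inequality, and your argument never establishes it.

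The genuine gap is in the $o(1)$ statement. The constant in $\operatorname{osc}_{T_1}g\le c(s)\|\nabla g\|_{L^s(T_1)}$ must blow up as $s\downarrow2$ (were it bounded, the limit would give $W^{1,2}(T_1)\hookrightarrow C^0(T_1)$, which is false), so after rescaling your oscillation bound reads $c(s(R))\,R^{1-2/s(R)}\,\delta(R)^{1/s(R)}$ with $\delta(R)=\int_{\complement S_R}\bigl(|\nabla\w|^{q_1}+|\nabla\w|^{q_2}\bigr)$. Taking $s(R)-2\sim1/\log R$ does keep $R^{1-2/s(R)}$ bounded, but then $c(s(R))\gtrsim\log R$, and since $\delta(R)\to0$ with no rate whatsoever the product need not vanish; any other choice of $s(R)$ merely trades one divergence for the other. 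A working argument keeps $s>2$ \emph{fixed} and exploits both sides of the exponent $2$ at the level of $\u$ rather than of $\nabla\u$: from the two parts already proved, $(\u-\u_0)/r\in L^{q_1}(\mathcal I)\cap L^{q_2}(\mathcal I)\subset L^{2}(\mathcal I)$, hence $R^{-1}\|\u-\u_0\|_{L^2(T_R)}\le c\,\|(\u-\u_0)/r\|_{L^2(T_R)}\to0$; the multiplicative Gagliardo--Nirenberg inequality on $T_1$, namely $\|g\|_{L^\infty}\le c\|\nabla g\|_{L^s}^{a}\|g\|_{L^2}^{1-a}+c\|g\|_{L^2}$ with $a=s/(2(s-1))$, is scale invariant in exactly the right way (the net power of $R$ after rescaling $T_R\to T_1$ is zero), and yields $\|\u-\u_0\|_{L^\infty(T_R)}\le c\,\|\nabla\u\|^{a}_{L^s(T_R)}\bigl(R^{-1}\|\u-\u_0\|_{L^2(T_R)}\bigr)^{1-a}+cR^{-1}\|\u-\u_0\|_{L^2(T_R)}\to0$. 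Your closing remark correctly identifies this as the delicate point, but the resolution you propose does not close it.
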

\begin{lemma}
\label{Lem1}
If $\u$ is a variational solution to $(\ref{conell})_1$ in $S_{\bar R}$, then for all $0<\rho<R\le \bar R$,
\begin{equation}
\label{Morr}
\into{  S_\rho}|\nabla\u|^2\le c\left({\rho\over R}\right)^{\gamma}\into{  S_R}|\nabla\u|^2, \quad \gamma={4\mu_0\over 5\mu_0+8\mu_e}.
\end{equation}
\end{lemma}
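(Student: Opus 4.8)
The statement to prove is Lemma~\ref{Lem1}: a Caccioppoli-type decay estimate
$$\into{S_\rho}|\nabla\u|^2 \le c\Big(\frac{\rho}{R}\Big)^\gamma \into{S_R}|\nabla\u|^2,\qquad \gamma = \frac{4\mu_0}{5\mu_0+8\mu_e},$$
for a variational solution of $\div\C[\nabla\u]=\0$ in a ball $S_{\bar R}$, valid for $0<\rho<R\le\bar R$.

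This is a classical De Giorgi–type hole-filling / iteration argument. Let me sketch the plan.

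**Plan.** The heart of the matter is a single-scale inequality comparing the Dirichlet integral on $S_\rho$ with that on $S_R$, which is then iterated. I will follow the Widman hole-filling technique.

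Step 1 (Caccioppoli inequality). For $\rho < R$ choose a cutoff $\eta\in C_0^\infty(S_R)$ with $\eta\equiv 1$ on $S_\rho$, $0\le\eta\le 1$, $|\nabla\eta|\le c/(R-\rho)$. Since $\u$ is a variational solution and we may translate by a constant, use $\varphi = \eta^2(\u - \u_\circ)$ as test function, where $\u_\circ$ is the mean value of $\u$ on the annulus $T = S_R\setminus S_\rho$. Expanding $\int_{S_R}\nabla(\eta^2(\u-\u_\circ))\cdot\C[\nabla\u] = 0$ and using symmetry of $\C$ together with the fact that $\C$ annihilates Skw (so $\nabla\u\cdot\C[\nabla\u] = \hat\nabla\u\cdot\C[\hat\nabla\u]$), the ellipticity bound $(\ref{Sxxa})$ gives, after absorbing the cross term by Young's inequality,
$$\into{S_R}\eta^2|\hat\nabla\u|^2 \le \frac{c\,\mu_e}{\mu_0}\into{S_R}|\nabla\eta|^2|\u-\u_\circ|^2 \le \frac{c\,\mu_e}{\mu_0(R-\rho)^2}\into{T}|\u-\u_\circ|^2.$$
Then I must pass from $\hat\nabla\u$ back to $\nabla\u$; on the left this is fine because Korn's inequality on $S_\rho$ (or simply $|\nabla\u|^2 = |\hat\nabla\u|^2 + |\tilde\nabla\u|^2$ controlled via Korn) bounds $\int_{S_\rho}|\nabla\u|^2$ by $c\int_{S_R}\eta^2|\hat\nabla\u|^2$ plus lower-order terms — here one uses that after subtracting a suitable rigid displacement the skew part is controlled. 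Combining, $\int_{S_\rho}|\nabla\u|^2 \le \frac{c}{(R-\rho)^2}\int_T|\u-\u_\circ|^2$.

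Step 2 (Poincaré on the annulus and the precise constant). Apply the Poincaré inequality on the annulus $T = S_R\setminus S_\rho$: $\int_T|\u - \u_\circ|^2 \le C_P(\rho,R)\int_T|\nabla\u|^2$. The exponent $\gamma$ in the statement — and in particular its explicit dependence $4\mu_0/(5\mu_0+8\mu_e)$ — comes from optimizing over the geometry, i.e.\ over the position of the intermediate radius and over the Young's-inequality parameter, together with the scale-invariant form $\int_T|\u-\u_\circ|^2 \le c\,R^2\int_T|\nabla\u|^2$ when $T$ is a fixed proportion annulus. Tracking constants carefully through Steps~1--2 yields a single-scale estimate of hole-filling type
$$\into{S_\rho}|\nabla\u|^2 \le \theta \into{S_R}|\nabla\u|^2$$
with an explicit $\theta = \theta(\mu_0/\mu_e) < 1$ for a fixed ratio $R/\rho$ (say $R = 2\rho$), after the standard "hole-filling" step of adding $\theta\int_{S_\rho}|\nabla\u|^2$ to both sides (replacing the annular right-hand side $\int_{S_R\setminus S_\rho}$ by $\int_{S_R}$ costs the factor $\theta/(1+\theta)<1$).

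Step 3 (Iteration). Set $f(r) = \int_{S_r}|\nabla\u|^2$, a nondecreasing function. From Step~2, $f(R/2) \le \theta f(R)$ with $\theta<1$; iterating, $f(R/2^k)\le \theta^k f(R)$. A standard interpolation lemma (e.g.\ Lemma~I.I.I of Giaquinta-type) converts this dyadic decay into the continuous power law $f(\rho) \le c\,(\rho/R)^\gamma f(R)$ with $\gamma = \log_2(1/\theta)$. Matching $\gamma = \log_2(1/\theta)$ with the claimed $\gamma = 4\mu_0/(5\mu_0+8\mu_e)$ fixes the value of $\theta$ that must emerge from the constant-chasing in Steps~1--2; this is really a bookkeeping identity.

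**Main obstacle.** The routine part is the hole-filling and iteration; the delicate part is getting the \emph{explicit} exponent $\gamma = 4\mu_0/(5\mu_0+8\mu_e)$ rather than just some unspecified $\gamma>0$. This forces one to (a) use Korn's inequality with explicit constant on a ball and the explicit Poincaré constant on the chosen annulus, and (b) optimize the free parameters (the Young's-inequality weight $\varepsilon$ and the ratio of radii) to make the resulting $\theta$ as small as the claimed formula permits. I expect the sharp tracking of the Korn constant — where the passage from the symmetric gradient estimate back to the full gradient genuinely costs a factor depending on the geometry — to be where the numbers $5$ and $8$ enter, and this is the step I would spend the most care on. Everything else is the classical machinery applied with constants kept visible throughout.
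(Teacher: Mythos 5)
Your overall strategy (Caccioppoli plus Poincar\'e plus hole--filling plus dyadic iteration) would prove \emph{a} decay estimate of the form \eqref{Morr} with \emph{some} positive exponent, but it cannot produce the specific exponent $\gamma=4\mu_0/(5\mu_0+8\mu_e)$ claimed in the lemma, and this is precisely the content you defer to ``bookkeeping''. The hole--filling mechanism yields $f(R/2)\le\theta f(R)$ with $\theta=C/(1+C)$, hence $\gamma=\log_2(1+1/C)$: a logarithm of the Caccioppoli--Poincar\'e--Korn constant. There is no way to tune the Young parameter, the annulus ratio and the Korn constant so that this logarithm equals the rational expression $4\mu_0/(5\mu_0+8\mu_e)$; the two formulas have different structure, and the hole--filling exponent is in general strictly smaller than the one obtainable by sharper means. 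The paper instead derives a \emph{differential inequality}: setting $G(R)=\int_{S_R}(|\nabla\u|^2+|\div\u|^2)$, it shows $\gamma\,G(R)\le R\,G'(R)$ and integrates, so that $\gamma$ appears directly as the ratio $\mu_0/(2\mu_e+\tfrac54\mu_0)=4\mu_0/(5\mu_0+8\mu_e)$ of the ellipticity constant to the constants in the boundary estimates. Two ingredients make this work and are absent from your sketch: (a) the pointwise identity $|\hat\nabla\u|^2-|\tilde\nabla\u|^2=\div[(\nabla\u)\u-(\div\u)\u]+|\div\u|^2$, which replaces Korn's inequality by an exact divergence identity, so that no Korn constant enters and only boundary terms on $\partial S_R$ remain; and (b) the observation that $\int_0^{2\pi}\C[\nabla\u]\e_R\,d\theta={\bf 0}$ and $\int_0^{2\pi}[(\nabla\u)^\top\e_R-(\div\u)\e_R]\,d\theta={\bf 0}$ on each circle, which allows one to subtract the circular mean of $\u$ and apply \emph{Wirtinger's inequality on the circle $\partial S_R$} (with its sharp constant $R$) rather than a Poincar\'e inequality on an annulus.

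Separately, your Step 1 is shakier than you indicate: passing from $\int\eta^2|\hat\nabla\u|^2$ back to $\int_{S_\rho}|\nabla\u|^2$ requires Korn's \emph{second} inequality, i.e.\ subtracting an infinitesimal rotation, not merely the constant $\u_\circ$ used in your test function; reconciling the two subtractions while keeping every constant explicit is exactly the difficulty the divergence identity above is designed to bypass. So the architecture of your proof is sound for establishing H\"older--type decay with an unspecified exponent, but the lemma as stated --- with that particular $\gamma$, which is then used quantitatively in Theorem \ref{T1} --- requires the differential--inequality argument of Piccinini--Spagnolo type that the paper employs.
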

\begin{proof}
Assume first that $\u$ is regular.  Taking into account that
\begin{equation}
\label{Kosss}
|\hat\nabla\u|^2-|\tilde\nabla\u|^2=\nabla\u\cdot\nabla\u^\top=\div[(\nabla\u)\u-(\div\u)\u]+|\div\u|^2,
\end{equation}
a simple computation yields
\begin{equation}
\label{ASERT}
\begin{array}{l}
\displaystyle \mu_0 G(R)=\mu_0\into{S_R}  \big(|\nabla\u|^2+|\div\u|^2\big)  \displaystyle=-\mu_0\into{\partial S_R}\e_R\cdot[\nabla\u  -(\div\u){\bf 1}] \u \\[20pt]
\;\;\displaystyle +2\mu_0\into{S_R} |\hat\nabla\u|^2\le2 \int_{\partial S_R}\u\cdot\Ci[\nabla\u]\e_R-\mu_0\into{\partial S_R}\e_R\cdot[\nabla\u  -(\div\u){\bf 1}] \u. 
\end{array}
\end{equation}
Since 
\begin{equation}
\label{Serv}
\begin{array}{l}
\displaystyle\into0^{2\pi}[(\nabla\u)^\top\e_R -(\div\u)\e_R](R,\theta)d\theta={\bf 0}, \\[15pt]
\displaystyle\into0^{2\pi}  \Ci[\nabla\u]\e_R (R,\theta)d\theta={\bf 0}, 
\end{array}
\end{equation}
by   Schwarz's inequality, Cauchy's inequality and  Wirtinger's inequality  
$$
\begin{array} {l}
\displaystyle\left| \,\into{\partial S_R }\e_R\cdot \nabla\u  \Big(\u-{1\over 2\pi}\int_0^{2\pi}\u\Big)
\right|\le  \Bigg\{\,\into{\partial S_R}\ |\nabla\u|^2\into{\partial S_R}\Big| \u-{1\over 2\pi}\int_0^{2\pi}\u\Big|^2\Bigg\}^{1/2}\\[15pt]
\;\;\;\quad\qquad\qquad\qquad\qquad\qquad\qquad\displaystyle\le R\into{\partial S_R}\ |\nabla\u|^2,\\[15pt]
\displaystyle\left| \,\into{\partial S_R } \e_R\cdot(\div\u) \Big(\u-{1\over 2\pi}\int_0^{2\pi}\u\Big)
\right|\le     R\into{\partial S_R}  |\div\u|^2+{1\over 4R}\into{\partial S_R}\Big| \u-{1\over 2\pi}\int_0^{2\pi}\u\Big|^2 ,\\[15pt]
\quad\qquad\qquad\qquad\qquad\qquad\qquad\displaystyle\le R\into{\partial S_R}|\div\u|^2+{R\over 4 } \into{\partial S_R}\ |\nabla\u|^2,\\[15pt]
\displaystyle\left| \,\into{\partial S_R }\Big(\u-{1\over 2\pi}\int_0^{2\pi}\u\Big)\cdot\Ci[\nabla\u]\e_R
\right|\le\mu_e R \into{\partial S_R}\ |\nabla\u|^2,
\end{array}
$$
and taking into account that by the basic calculus
$$
G'(R)=\into{\partial S_R}\big(|\nabla\u|^2+|\div\u|^2\big),
$$
 (\ref{ASERT}) yields
$$
\gamma G(R)\le RG'(R).
$$
Hence (\ref{Morr}) follows by a simple integration. The above argument applies to a variational  solution by a classical approximation argument (see, $e.g.$,  footnote $^{(1)}$ in \cite{PS0}).
\end{proof}

\begin{remark}  
\label{CaccioCam}
If $\u$ is a variational solution to (\ref{conell})$_1$ vanishing on $\partial\Omega$ and such that $\int_{\partial\Omega}\s(\u)={\bf 0}$, then by repeating the steps in the proof of Lemma \ref{Lem1}, it follows
\begin{equation}
\label{Morr2}
\into{\Omega_\rho}|\nabla\u|^2\le c\left({\rho\over R}\right)^{\gamma}\into{\Omega_R}|\nabla\u|^2.
\end{equation}

\end{remark}
\begin{lemma}
\label{Cacciol}
If $\u$ is a variational solution to 
\begin{equation*}
\div\C[\nabla{\u}]+{\bmit f} ={\0}\quad\hbox{\rm in }\Omega,
\end{equation*}
with ${\bmit f}$ having compact support, then for large $R$
\begin{equation}
\label{Caccioppoli}
\into{  \Omega_R}|\nabla\u|^2\le  c\Big\{{1\over R^2} \into{ T_R}| \u|^2+ \sigma(\u)\Big\},   
\end{equation}
where
$$
\sigma(\u)=2\into{\partial\Omega}\u\cdot\s(\u)-\mu_0\into{\partial\Omega}\n\cdot[\nabla\u  -(\div\u){\bf 1}] \u+2\into{\Omega}{\bmit f}\cdot\u.
$$
\end{lemma}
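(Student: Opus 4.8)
The plan is to prove the Caccioppoli--type inequality \eqref{Caccioppoli} by combining the energy identity for $\u$ on $\Omega_R$ with the identity \eqref{Kosss} and a cut--off argument in the annulus $T_R$. First I would assume $\u$ is regular (the general variational case following by the approximation argument already invoked in Lemma~\ref{Lem1}). Writing the generalized work and energy relation for the modified equation $\div\C[\nabla\u]+\f=\0$ on $\Omega_R$ with $R$ large (so that $\mathrm{supp}\,\f\subset S_R$ and $\partial\Omega\subset S_R$) gives
\begin{equation*}
\into{\Omega_R}\nabla\u\cdot\C[\nabla\u]=\into{\partial\Omega}\u\cdot\s(\u)+\into{\partial S_R}\u\cdot\s(\u)+\into{\Omega}\f\cdot\u .
\end{equation*}
The left side is bounded below by $\mu_0\int_{\Omega_R}|\nabla\u|^2$ by the first inequality in \eqref{Sxxa}. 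So everything reduces to estimating the boundary integral $\int_{\partial S_R}\u\cdot\s(\u)$ on the far sphere, and it is here that the term $\frac1{R^2}\int_{T_R}|\u|^2$ must be produced, which I expect to be the main obstacle.

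To handle that term I would not take a single sphere $\partial S_R$ but rather integrate over a band: choose a cut--off $\eta(r)$ equal to $1$ on $S_R$, vanishing on $\complement S_{2R}$, with $|\eta'|\le c/R$, and test the weak formulation of $\div\C[\nabla\u]+\f=\0$ against $\eta^2\u$ (legitimate since $\eta^2\u$ has compact support and $\u$ is a variational solution, after accounting for the nonzero trace on $\partial\Omega$ via the standard extension / subtraction of a fixed field supported near $\partial\Omega$). This yields
\begin{equation*}
\into{\Omega}\eta^2\nabla\u\cdot\C[\nabla\u]=-\,2\into{T_R}\eta\,(\nabla\eta\otimes\u)\cdot\C[\nabla\u]+\into{\partial\Omega}\u\cdot\s(\u)+\into{\Omega}\eta^2\f\cdot\u ,
\end{equation*}
where I absorbed the boundary contribution on $\partial\Omega$ as above. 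The cross term is estimated by Cauchy's inequality with a small parameter: $2|\eta(\nabla\eta\otimes\u)\cdot\C[\nabla\u]|\le \varepsilon\,\eta^2|\C[\nabla\u]|\,|\nabla\u|+\varepsilon^{-1}|\nabla\eta|^2|\u|^2\le \varepsilon\mu_e\eta^2|\nabla\u|^2+c\,\varepsilon^{-1}R^{-2}\mathbf 1_{T_R}|\u|^2$, using the upper bound in \eqref{Sxxa} for $|\C[\nabla\u]|\le\mu_e|\nabla\u|$ (more precisely the Cauchy--Schwarz bound $|\C[\nabla\u]\cdot\nabla\u|\le\mu_e|\nabla\u|^2$; the elementary estimate $|\C[\nabla\u]|\le C|\nabla\u|$ also follows from boundedness of $\C$). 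Choosing $\varepsilon$ so that $\varepsilon\mu_e<\mu_0$, the term $\varepsilon\mu_e\int\eta^2|\nabla\u|^2$ is absorbed into the left side, which dominates $(\mu_0-\varepsilon\mu_e)\int_{\Omega_R}|\nabla\u|^2$. What remains on the right is exactly $c\,R^{-2}\int_{T_R}|\u|^2$ plus $\int_{\partial\Omega}\u\cdot\s(\u)+\int_\Omega\f\cdot\u$.

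Finally I would match this remainder with the claimed $\sigma(\u)$. The extra boundary term $-\mu_0\int_{\partial\Omega}\n\cdot[\nabla\u-(\div\u)\mathbf 1]\u$ and the factors of $2$ appearing in the definition of $\sigma(\u)$ arise exactly as in the proof of Lemma~\ref{Lem1}: using identity \eqref{Kosss}, one has $\int_{\Omega_R}|\nabla\u|^2=\int_{\Omega_R}(2|\hat\nabla\u|^2-|\div\u|^2)-\int_{\partial(\Omega_R)}\n\cdot[(\nabla\u)\u-(\div\u)\u]$, and on $\partial S_{2R}$ the $\div$--type divergence--term is again controlled (with the help of the mean--value relations \eqref{Serv}, Wirtinger's inequality and Cauchy's inequality) by $cR^{-2}\int_{T_R}|\u|^2$, while the $2|\hat\nabla\u|^2$ part is bounded by $2\int\eta^2\hat\nabla\u\cdot\C[\nabla\u]/\mu_0$ and fed into the testing identity above, producing the coefficient $2$ in front of $\int_{\partial\Omega}\u\cdot\s(\u)$ and of $\int_\Omega\f\cdot\u$ and the $-\mu_0$ in front of the $\partial\Omega$ boundary term. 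Collecting the constants and dividing through gives \eqref{Caccioppoli}. The only genuinely delicate point, as I said, is to make the cut--off test function argument produce the annular $L^2$ norm of $\u$ rather than an $L^2$ norm on all of $\complement S_R$; this is resolved precisely because $\nabla\eta$ is supported in $T_R$, so no decay hypothesis on $\u$ at infinity is needed here.
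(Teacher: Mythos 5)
Your overall strategy --- a cut--off $g_R$ supported in $S_{2R}$ with $|\nabla g_R|\le c/R$, testing the weak formulation against $g_R^2\u$, and absorbing the cross term by Cauchy's inequality with a small parameter --- is exactly the paper's, and the final paragraph of your proposal, where you bring in the identity \eqref{Kosss}, is in substance the paper's proof. But as written the argument contains a step that fails: in your first two paragraphs you claim that $\int\nabla\u\cdot\C[\nabla\u]$ dominates $\mu_0\int|\nabla\u|^2$ ``by the first inequality in \eqref{Sxxa}''. It does not: \eqref{Sxxa} is coercivity on $\mathrm{Sym}$ only, and since $\C$ vanishes on $\Omega\times\mathrm{Skw}$ one has $\nabla\u\cdot\C[\nabla\u]=\hat\nabla\u\cdot\C[\hat\nabla\u]\ge\mu_0|\hat\nabla\u|^2$, which controls only the symmetric part of the gradient. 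If your paragraph--two absorption argument were valid, the term $-\mu_0\int_{\partial\Omega}\n\cdot[\nabla\u-(\div\u)\mathbf 1]\u$ could be deleted from $\sigma(\u)$ altogether; its presence in the statement is the tell that the full gradient cannot be recovered from the elastic energy alone. This is precisely why the paper routes the proof through the Korn--type identity \eqref{Kosss}.

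Your third paragraph supplies the missing ingredient, but with one imprecision worth fixing: you propose to integrate \eqref{Kosss} over the truncated domain and then control the resulting boundary integral of $\n\cdot[(\nabla\u)\u-(\div\u)\u]$ over $\partial S_{2R}$ by $cR^{-2}\int_{T_R}|\u|^2$. That boundary integral involves $\nabla\u$ on the circle and is not controlled by an annular $L^2$ norm of $\u$; the mean--value relations \eqref{Serv} together with Wirtinger's inequality trade it for $R\int_{\partial S_R}|\nabla\u|^2$, which is what Lemma \ref{Lem1} needs but not what you need here. The correct move, consistent with the rest of your argument, is to multiply the pointwise identity \eqref{Kosss} by $g_R^2$ \emph{before} integrating, so that the divergence term becomes the volume integral $-2\int_{T_R}g_R\nabla g_R\cdot[(\nabla\u)\u-(\div\u)\u]$, supported where $\nabla g_R$ lives, plus the fixed contribution on $\partial\Omega$ (where $g_R\equiv 1$ for large $R$). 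That volume term is then handled by the same Cauchy--with--$\varepsilon$ absorption you already use for the $\C[\nabla\u]$ cross term, and it is exactly what produces $R^{-2}\int_{T_R}|\u|^2$ on the right of \eqref{Caccioppoli}. With that correction, your assembly in the last paragraph reproduces the paper's inequality, including the coefficient $2$ in front of $\int_{\partial\Omega}\u\cdot\s(\u)$ and of $\int_\Omega\f\cdot\u$ and the $-\mu_0$ boundary term in $\sigma(\u)$.
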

\begin{proof}
Let
\begin{equation}
\label{function1}
g_R(r)=\begin{cases}0,&r>2R,\\
1,& r<R,\\
 R^{-1}(2R-r),&r\in[R,2R],
\end{cases}
\end{equation}
with $S_R\supset {\rm supp} {\bmit f}$.
A standard calculation and (\ref{Kosss}) yield
\begin{equation}
\label{ASERT2}
\begin{array}{l}
\displaystyle \mu_0\into{\Omega}  g_R^2\big(|\nabla\u|^2+|\div\u|^2\big)  \displaystyle=2\mu_0\into{\Omega}g_R\nabla g_R\cdot[\nabla\u  -(\div\u){\bf 1}] \u \\[20pt]
\;\;\displaystyle +2\mu_0\into{\Omega} g_R^2|\hat\nabla\u|^2-\mu_0\into{\partial\Omega}\n\cdot[\nabla\u  -(\div\u){\bf 1}] \u 
\\[20pt]
\;\;\le2\displaystyle\into{\Omega}g_R\nabla g_R\cdot\big(\mu_0(\nabla\u  -(\div\u){\bf 1})-2\Ci[\nabla\u]\big)\u+ 2\into{\partial\Omega}\u\cdot\s(\u)
\\[20pt]
\;\;-\mu_0\displaystyle\into{\partial\Omega}\n\cdot[\nabla\u  -(\div\u){\bf 1}] \u+2\into{\Omega}{\bmit f}\cdot \u.
\end{array}
\end{equation}
By a simple   application  of Cauchy's inequality (\ref{ASERT2}) implies
$$
\into\Omega  g_R^2 |\nabla\u|^2\le c \into\Omega  |\nabla g_R|^2 |\u|^2 +\sigma(\u).
$$
Hence (\ref{Caccioppoli}) follows by  the properties of the function $g_R$.
 \end{proof}

\begin{remark}  
\label{lem3out}
Under the stronger assumption $\u$ is a $D$-solution, we can repeat the previous argument to obtain instead of \eqref{Caccioppoli} the following inequality, for $R$ sufficiently large
\begin{equation}
\label{CaccioppoliOut}
\into{\complement S_R}|\nabla\u|^2\,dx\le \frac c {R^2}\into{T_R}|\u|^2\,\,.
\end{equation}
In such case instead of the function $g_R$ we have to consider the function
\begin{equation}
\label{function2}
\eta_R(r)=\begin{cases}0,&r<R,\\
1,& r>2R,\\
 R^{-1}(r-R),&r\in[R,2R]\,,
\end{cases}
\end{equation}
 and the thesis follows similarly.
\end{remark}

\begin{lemma}
\label{Lemmm34}
Let $\u$ be a variational solution to $(\ref{conell})_1$ such that 
\begin{equation}
\label{NRT0} 
\into {\partial\Omega}\s(\u)={\0}.
\end{equation}
If
 \begin{equation}
\label{MBazzzr}
\u(x)=o(r^{\gamma/2}),
\end{equation}
then $\nabla\u\in L^2(\Omega)$ and
\begin{equation}
\label{Nxx0} 
\into{\Omega}\nabla\u\cdot\C[\nabla\u]=\into{\partial\Omega}\u\cdot\s(\u).
\end{equation}
\end{lemma}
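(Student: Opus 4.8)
The plan is to show that the growth condition \eqref{MBazzzr}, combined with the decay of the local Dirichlet integral provided by Remark \ref{CaccioCam}, forces the Dirichlet integral over the whole exterior domain to be finite, after which the energy identity \eqref{Nxx0} follows by a standard cut-off argument. First I would invoke \eqref{Morr2}: since $\u$ vanishes on $\partial\Omega$ and satisfies \eqref{NRT0}, we have $\int_{\Omega_\rho}|\nabla\u|^2\le c(\rho/R)^\gamma\int_{\Omega_R}|\nabla\u|^2$ for all $R_0\le\rho<R$. The point is to bound the right-hand side using only the growth hypothesis. I would apply the Caccioppoli-type inequality \eqref{Caccioppoli} of Lemma \ref{Cacciol} (with $\bmit f=\0$, so $\sigma(\u)=2\int_{\partial\Omega}\u\cdot\s(\u)-\mu_0\int_{\partial\Omega}\n\cdot[\nabla\u-(\div\u)\bf 1]\u$, which vanishes because $\u=\0$ on $\partial\Omega$), getting $\int_{\Omega_R}|\nabla\u|^2\le \frac{c}{R^2}\int_{T_R}|\u|^2$. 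By \eqref{MBazzzr}, on $T_R$ we have $|\u|^2=o(R^\gamma)$, and $|T_R|=O(R^2)$, hence $\int_{\Omega_R}|\nabla\u|^2=o(R^\gamma)$.

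Next I would combine the two estimates: feeding $\int_{\Omega_R}|\nabla\u|^2=o(R^\gamma)$ into \eqref{Morr2} gives, for fixed $\rho$,
\begin{equation*}
\into{\Omega_\rho}|\nabla\u|^2\le c\,\rho^\gamma\,\limsup_{R\to+\infty}\frac{1}{R^\gamma}\into{\Omega_R}|\nabla\u|^2=0\cdot c\rho^\gamma\quad\text{in the limit,}
\end{equation*}
so letting $R\to+\infty$ with $\rho$ fixed yields $\int_{\Omega_\rho}|\nabla\u|^2\le c\rho^\gamma\cdot o(1)$... more precisely, since the bound $c(\rho/R)^\gamma\cdot o(R^\gamma)=c\rho^\gamma\cdot o(1)\to 0$, we in fact obtain that the quantity $\int_{\Omega_\rho}|\nabla\u|^2$ is bounded by an expression tending to zero — but $\rho$ is fixed, so one must be slightly more careful. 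The correct reading: $\int_{\Omega_\rho}|\nabla\u|^2\le c(\rho/R)^\gamma\int_{\Omega_R}|\nabla\u|^2$; the left side is a fixed finite number (finite because $\u\in W^{1,2}_{\rm loc}$), while letting $R\to\infty$ on the right gives $c\rho^\gamma\cdot\lim_{R\to\infty}R^{-\gamma}\int_{\Omega_R}|\nabla\u|^2=0$. Hence $\int_{\Omega_\rho}|\nabla\u|^2=0$? That would be too strong; the resolution is that one does not send $R\to\infty$ but rather concludes directly that $\sup_\rho\int_{\Omega_\rho}|\nabla\u|^2<+\infty$: choose $R=2\rho$, so $\int_{\Omega_\rho}|\nabla\u|^2\le c2^{-\gamma}\int_{\Omega_{2\rho}}|\nabla\u|^2$, and iterate together with the growth bound $\int_{\Omega_{2^k}}|\nabla\u|^2=o(2^{k\gamma})$ to get a convergent geometric series showing $\int_{\Omega_R}|\nabla\u|^2\le C$ uniformly in $R$; letting $R\to+\infty$ gives $\nabla\u\in L^2(\Omega)$.

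Once $\nabla\u\in L^2(\Omega)$ is established, the energy identity \eqref{Nxx0} follows by a routine localization: multiply $(\ref{conell})_1$ by $\eta_R^c\u$ where $\eta_R^c=1-\eta_R$ with $\eta_R$ as in \eqref{function2} (a cut-off equal to $1$ on $S_R$, supported in $S_{2R}$), integrate by parts over $\Omega$ to obtain
\begin{equation*}
\into{\Omega}(1-\eta_R)\nabla\u\cdot\C[\nabla\u]=\into{\partial\Omega}\u\cdot\s(\u)+\into{T_R}(\nabla\eta_R\otimes\u)\cdot\C[\nabla\u],
\end{equation*}
and estimate the remainder by Cauchy–Schwarz: $|\int_{T_R}\nabla\eta_R\otimes\u\cdot\C[\nabla\u]|\le \frac{c}{R}(\int_{T_R}|\u|^2)^{1/2}(\int_{T_R}|\nabla\u|^2)^{1/2}\le c\,R^{\gamma/2-1}\cdot o(R^{\gamma/2})\cdot (\int_{\complement S_R}|\nabla\u|^2)^{1/2}$; since $\gamma\le 4\mu_0/(5\mu_0)<1$ and the tail $\int_{\complement S_R}|\nabla\u|^2\to 0$, this vanishes as $R\to+\infty$, while the left side converges to $\int_\Omega\nabla\u\cdot\C[\nabla\u]$ by dominated convergence. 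This gives \eqref{Nxx0}.

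The main obstacle is the first part — correctly combining the reverse-Hölder/Morrey decay \eqref{Morr2} with the Caccioppoli bound to upgrade the $o(r^{\gamma/2})$ pointwise growth into global finiteness of the Dirichlet integral; the delicate point is that \eqref{Morr2} alone is not enough (it only propagates information inward), so one genuinely needs to couple it with \eqref{Caccioppoli} and the hypothesis on $\u$ via a dyadic iteration to close the estimate uniformly in $R$. The final integration-by-parts step is then standard, the only thing to check being that the exponent $\gamma/2-1<0$ so that the boundary-layer term at infinity is harmless.
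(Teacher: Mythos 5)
Your argument hinges on an assumption that is not in the statement: you write ``since $\u$ vanishes on $\partial\Omega$'' in order to invoke \eqref{Morr2} and to conclude $\sigma(\u)=0$. Lemma \ref{Lemmm34} only assumes that $\u$ solves $(\ref{conell})_1$ with $\int_{\partial\Omega}\s(\u)=\0$; the boundary values are arbitrary (indeed the conclusion \eqref{Nxx0} carries the nontrivial term $\int_{\partial\Omega}\u\cdot\s(\u)$, and the lemma is later applied to the homogeneous traction problem, where $\u\ne\0$ on $\partial\Omega$). Remark \ref{CaccioCam} is therefore unavailable and the first part of your proof collapses. Moreover, in the special case you have implicitly reduced to, your own computation shows the statement is trivialized: a single application of \eqref{Morr2} with $R\to+\infty$ gives $\int_{\Omega_\rho}|\nabla\u|^2\le c\rho^\gamma\lim_{R\to\infty}R^{-\gamma}\,o(R^\gamma)=0$, i.e.\ $\u\equiv\0$ (which is in fact correct for zero boundary data and zero traction resultant, by part (iii) of Theorem \ref{T1}); the dyadic iteration you substitute for this does not close anyway, since the constant $c$ in \eqref{Morr2} need not satisfy $c\,2^{-\gamma}<1$. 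The paper's proof is built precisely to avoid the boundary: it cuts off near infinity, $\vi=\eta_{\bar R}\u$, so that $\vi$ solves a problem in all of ${\Bbb R}^2$ with compactly supported forcing $\f$, then splits $\vi=\vi_1+\vi_2$ on large balls $S_R$, applying the \emph{interior} Morrey estimate \eqref{Morr} to the homogeneous part $\vi_1$ and proving a uniform bound for the forced part $\vi_2$; the hypothesis \eqref{NRT0} enters exactly there, through $\int_{T_{\bar R}}\Cc_{ijhk}\partial_ku_h\partial_j\eta_{\bar R}=0$, which permits subtracting the mean of $\vi_2$ and using Poincar\'e. Some version of this decomposition is needed; \eqref{Morr2} cannot be the starting point.

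The second half also has a gap. The remainder should be estimated as $\frac cR\big(\int_{T_R}|\u|^2\big)^{1/2}\big(\int_{T_R}|\nabla\u|^2\big)^{1/2}=o(R^{\gamma/2})\,\|\nabla\u\|_{L^2(T_R)}$ (the factor $R^{\gamma/2-1}$ in your bound is spurious, since $\int_{T_R}|\u|^2=o(R^{2+\gamma})$), and $o(R^{\gamma/2})$ times a quantity that merely tends to zero is not $o(1)$. To close this you must either use \eqref{NRT0} once more --- $\int_{T_R}\C[\nabla\u]\e_r=\0$ lets you replace $\u$ by $\u-\u_{T_R}$ and apply Poincar\'e on the annulus, giving the bound $c\int_{T_R}|\nabla\u|^2\to0$, which is what the paper does --- or invoke the quantitative tail decay \eqref{Curti} of Lemma \ref{Lem2}, legitimate once $\u$ is known to be a $D$--solution.
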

 \begin{proof}
Let $\eta_R$ be the function defined in \eqref{function2}.
For large $\bar R$  the field
\begin{equation}
\label{Bssww}
\vi=\eta_{\bar R}\u
\end{equation}   is a variational solution to 
\begin{equation}
\label{BVswe3aw}
\div\Ci [\nabla\vi] +\f={\bf 0}\quad \hbox{\rm in }{\Bbb R}^2,
\end{equation}
with
\begin{equation}
\label{Bxvvfrw}
f_i=-\Cc_{ijhk}\partial_ku_h\partial_j\eta_{\bar R}-\partial_j(\Cc_{ijhk} u_h\partial_k\eta_{\bar R}).
\end{equation}
 Let
$\vi_1$ and $\vi_2$ be the variational  solutions to the systems
\begin{equation}
\label{Sist1}
\begin{array}{r@{}l}
 \div{\C}[\nabla \vi_1]  & {} ={ 0}\quad\hbox{\rm in
}S_R ,\\[2pt]
\vi_1& {} = \vi \quad\hbox{\rm on }\partial S_R ,
\end{array}
\end{equation}
and 
\begin{equation}
\label{Sist2}
\begin{array}{r@{}l}
 \div{\C} [\nabla \vi_2] +\f & {} ={ \0}\quad\hbox{\rm in
}S_R ,\\[2pt]
 \vi_2 & {} ={\bf 0} \quad\hbox{\rm on }\partial S_R,
\end{array}
\end{equation}
respectively, with $R>2{\bar R}$. By (\ref{Morr})
\begin{equation}
\label{Saa2}
\into{  S_\rho}|\nabla\vi_1|^2\le c\left({\rho\over R}\right)^{\gamma}\into{  S_R}|\nabla\vi_1|^2.
\end{equation}
A simple computation and the first Korn inequality
$$
\|\nabla \vi_2\|_{L^2(S_R)}\le \sqrt 2\|\hat\nabla \vi_2\|_{L^2(S_R)}
$$ yield
\begin{equation*}
%\label{LLLo90}
\mu_0\into{S_R}|\nabla\vi_2|^2\le 2\into{T_{\bar R}}\partial_j{v_2}_i\Cc_{ijhk} u_h\partial_k\eta_{\bar R}- 2\into{T_{\bar R}}{v_2}_i\Cc_{ijhk}\partial_ku_h\partial_j\eta_{\bar R}={\mathcal J}_1+{\mathcal J}_2.
\end{equation*}
By Schwarz's  inequality
$$
|{\mathcal J}_1|^2\le c\into{S_R}|\nabla\vi_2|^2 \into{T_{\bar R}}|\u|^2\le  c\into{S_R}|\nabla\vi_2|^2,  
$$
and since by  (\ref{NRT0}) $\int_{T_{\bar R}}\Cc_{ijhk}\partial_ku_h\partial_j\eta_{\bar R}=0$,
$$
|{\mathcal J}_2|^2\le c \into{T_{\bar R}}\Big|\vi_2-{1\over |T_{\bar R}|}\int_{T_{\bar R}}\vi_2\Big|^2 \into{T_{\bar R}}|\Ci[\nabla\u]|^2\le c\into{S_R}|\nabla\vi_2|^2.
$$
Hence
\begin{equation}
\label{Saa3}
\into{  S_R}|\nabla\vi_2|^2\le c_0.
\end{equation}
By uniqueness $\vi=\vi_1+\vi_2$ in $S_R$. Therefore, putting together (\ref{Saa2}), (\ref{Saa3}), using the inequality $|a+b|^2\le 2|a|^2+2|b|^2$ and Lemma \ref{Cacciol}, we get
\begin{equation}
\label{sstew23}
\begin{array}{r@{}l}
\displaystyle\into {S_\rho }|\nabla \vi|^2 & {} \displaystyle\le 2\into {S_\rho }(|\nabla \vi_1|^2
+|\nabla\vi_2|^2) 
\le c\left({\rho\over R}\right)^\gamma\into {S_R}|\nabla \vi_1|^2+c_0  \\[15pt]
 & {}   \displaystyle  \le c\left({\rho\over R}\right)^\gamma\into {S_R}|\nabla \vi |^2+c_0\le {c(\rho)\over R^{2+\gamma}}\into{T_R}|\u|^2+ c.
\end{array}
\end{equation}
Hence, taking into account (\ref{MBazzzr}), letting $R\rightarrow+\infty$, we obtain $\nabla\u\in L^2(\Omega)$.

Let consider now the function (\ref{function1}). Multiplying (\ref{conell})$_1$ scalarly by $g_R\u$ and integrating by parts, we get
\begin{equation}
\label{uguag}
\into{\Omega}g_R\nabla\u\cdot\C[\nabla\u]=\into{\partial\Omega}\u\cdot\s(\u)-\into {T_R}\nabla g_R\cdot\C[\nabla\u]\u.
\end{equation}
From (\ref{NRT0}) it follows that $\into{T_R}\C[\nabla\u]\e_r={\bf 0}$, so that by applying Schwarz's inequality and Poincar\'e's inequality 
\begin{equation*}
\Big|\displaystyle\into{\Omega}\nabla g_R\cdot\C[\nabla\u]\u\Big|\le \displaystyle{c\over R}\Big(\into{T_R}|\u-\u_{T_R}|^2\Big)^{1\over 2}\Big(\into{T_R}|\nabla \u|^2\Big)^{1\over 2} \le c \into{T_R}|\nabla \u|^2.
%\begin{array}{r@{}l}
%\Big|\displaystyle\into{\Omega}\nabla g_R\cdot\C[\nabla\u]\u\Big|\le \displaystyle{c\over R}\Big(\into{T_R}|\u-\u_{T_R}|^2\Big)^{1\over 2}\Big(\into{T_R}|\nabla \u|^2\Big)^{1\over 2} \le c \into{T_R}|\nabla \u|^2.
%\\[21pt]
%{}& 
%\end{array}
\end{equation*}
Therefore, (\ref{Nxx0}) follows from (\ref{uguag}) by letting $R\to +\infty$ and taking into account the properties of $g_R$ and that $\nabla\u\in L^2(\Omega)$.

\end{proof}

\begin{remark} 
\label{Dqsol}
In the previous Lemma we proved, in particular,  that a variational solution which satisfies \eqref{NRT0} and \eqref{MBazzzr} is a $D$--solution. Another sufficient condition to have a $D$-solution is to assume \eqref{NRT0} and $\u\in D^{1,q}(\complement S_{R_0})$, for some $R_0$ sufficiently large and for some $q\in \left ( 2, \frac 4 {2-\gamma}\right)$.  Indeed, by reasoning as in  \eqref{sstew23} and applying H\"older's inequality we obtain
$$
\into {S_\rho }|\nabla \vi|^2   \le c\left({\rho\over R}\right)^\gamma\into {S_R}|\nabla \vi |^2+c_0\le c(\rho) R^{\frac {2(q-2)}{q}-\gamma}\left (\int_{\complement S_{R_0}}|\nabla \vi|^q\right )^{2/q}+c_0\,.
$$
Then we get $\nabla\u\in L^2(\Omega)$ on letting $R\to+\infty$.
\end{remark}

\begin{remark} 
From Lemma \ref{Lemmm34} it follows that up to a constant  the homogeneous traction problem
\begin{equation*}
%\label{xsslcon0}
\begin{array}{r@{}l}
\div{\C}[\nabla \u]& {} ={\0}\!\!\qquad\qquad \hbox{\rm in }\Omega, \\[4pt]
\s(\u)&{} ={\0}\!\!\qquad\qquad \hbox{\rm on }\partial\Omega,\\[4pt]
\u(x)& {} =o(r^{\gamma/2}),\end{array}  
\end{equation*}
has only the trivial solution.
\end{remark}

\begin{lemma}  
\label{Lem2}
A D--solution $\u$ to   $(\ref{conell})_1$ satisfies $(\ref{NRT0} )$ and for all $R>\rho\gg R_0$,
\begin{equation}
\label{Curti}
\into{\complement  S_R}|\nabla\u|^2\le c\left({\rho\over R}\right)^{\gamma}\into{\complement  S_\rho}|\nabla\u|^2.
\end{equation}
\end{lemma}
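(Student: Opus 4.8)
\textbf{Proof strategy for Lemma \ref{Lem2}.}

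The plan is to split the assertion into two parts: first, that a $D$-solution automatically satisfies the zero-resultant condition $(\ref{NRT0})$; second, the Morrey-type decay estimate $(\ref{Curti})$ on the complementary domains $\complement S_R$. For the first part, I would fix a large radius $R$ and integrate $(\ref{conell})_1$ against the cut-off field $\eta_R\,\e_i$, where $\eta_R$ is the function from $(\ref{function2})$ and $\e_i$ is a constant basis vector; integration by parts gives $\int_{\partial\Omega}\s(\u)\cdot\e_i = \int_{T_R}\nabla\eta_R\cdot\C[\nabla\u]\,\e_i$. Since $\u\in D^{1,2}(\Omega)$ we have $\nabla\u\in L^2(\Omega)$, so the right-hand side is bounded by $\tfrac{c}{R}\|\nabla\u\|_{L^2(T_R)}\cdot|T_R|^{1/2}\le c\,\|\nabla\u\|_{L^2(T_R)}$, which tends to $0$ as $R\to\infty$ because $\|\nabla\u\|_{L^2(\complement S_R)}\to 0$. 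Hence $\int_{\partial\Omega}\s(\u)=\0$.

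For the decay estimate $(\ref{Curti})$, the idea is to run the flux-comparison argument of Lemma \ref{Lemmm34} from the outside rather than the inside. Given $\rho < R \ll$ the exterior region, on the annulus-type domain I would decompose $\u$ on $\complement S_\rho$ (or rather on a large ball $S_{\bar R}$ with $\bar R\gg R$) as $\u=\vi_1+\vi_2$, where $\vi_1$ solves the homogeneous system with $\vi_1=\u$ on $\partial S_{\bar R}$ and $\vi_2$ carries the correction; applying Lemma \ref{Lem1} to the harmonic-type piece $\vi_1$ gives the factor $(\rho/R)^\gamma$, while the zero-resultant property $(\ref{NRT0})$ just established (together with Remark \ref{CaccioCam}, i.e.\ the interior Caccioppoli inequality $(\ref{Morr2})$ applied now on $\complement S_R$ via the reversed cut-off $\eta_R$) controls the remainder uniformly. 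Concretely, using Remark \ref{lem3out}, for a $D$-solution we have the outer Caccioppoli inequality $(\ref{CaccioppoliOut})$: $\int_{\complement S_R}|\nabla\u|^2\le \tfrac{c}{R^2}\int_{T_R}|\u|^2$. Combining the energy comparison on $S_R\setminus S_\rho$ with this bound and with Hardy's inequality from Lemma \ref{Inequal} (which, since $\u\in D^{1,2}(\mathcal I)$, yields $\int_{\mathcal I}|\u-\u_0|^2/r^2\le c\int_{\mathcal I}|\nabla\u|^2$, and in fact $\u_0=\0$ because $\u$ is a $D$-solution vanishing at infinity in the generalized sense), one absorbs the boundary terms and iterates to get $(\ref{Curti})$.

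The main obstacle I anticipate is making the exterior energy-comparison rigorous: unlike the interior case of Lemma \ref{Lemmm34}, here one works on unbounded domains, so the harmonic-type comparison field $\vi_1$ must itself be shown to have finite Dirichlet integral and to satisfy the decay $(\ref{Morr})$ in the correct (outer) direction — this requires either an inversion/Kelvin-type change of variables mapping $\complement S_R$ to a punctured ball, or a direct variational construction of $\vi_1$ on the bounded region $S_{\bar R}\setminus S_\rho$ followed by a limit $\bar R\to\infty$ using the uniform bound $(\ref{Saa3})$-type estimate for the remainder. The second technical point is verifying that the constant vector $\u_0$ furnished by Hardy's inequality vanishes; this follows from the defining property of a $D$-solution (it tends to $\0$ at infinity in the generalized sense, hence $\u=\u_0+o(1)$ forces $\u_0=\0$), but it must be invoked explicitly so that the Poincaré/Wirtinger steps on $T_R$ can be run without a constant-shift ambiguity. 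Once these are in place, $(\ref{Curti})$ follows by the same one-step energy inequality and integration as in Lemma \ref{Lem1}.
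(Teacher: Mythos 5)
Your treatment of the first assertion, $(\ref{NRT0})$, is correct and coincides with the paper's: testing $(\ref{conell})_1$ with a radial cut-off times a constant vector turns $\int_{\partial\Omega}\s(\u)$ into $\frac1R\int_{T_R}\C[\nabla\u]\e_r$, which is $O(\|\nabla\u\|_{L^2(T_R)})$ and vanishes as $R\to\infty$ since $\nabla\u\in L^2(\Omega)$. (Whether one uses $g_R$ or $\eta_R$ is immaterial.)

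The second assertion is where your outline has a genuine gap: $(\ref{Curti})$ is an \emph{exterior} decay estimate with the \emph{specific} exponent $\gamma=4\mu_0/(5\mu_0+8\mu_e)$, and neither route you sketch produces it. First, Lemma \ref{Lem1} (and the decomposition of Lemma \ref{Lemmm34} built on it) controls energies on interior balls $S_\rho$; the exterior analogue on $\complement S_R$ is exactly what has to be proved and cannot be borrowed. The inversion $x\mapsto x/|x|^2$ does not close this gap: it preserves the two--dimensional Dirichlet energy and the ellipticity bounds, but the transformed tensor no longer annihilates ${\rm Skw}$, so the Korn--type identity $(\ref{Kosss})$ --- which is what produces the particular value of $\gamma$ in Lemma \ref{Lem1} --- is not available for the transformed system. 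Second, the route via the outer Caccioppoli inequality $(\ref{CaccioppoliOut})$ plus Poincar\'e on $T_R$ and iteration does yield $\int_{\complement S_{2R}}|\nabla\u|^2\le\theta\int_{\complement S_R}|\nabla\u|^2$ for some $\theta<1$, hence decay with \emph{some} positive exponent, but not with $\gamma$, and $\gamma$ is what is used downstream (Theorem \ref{T1}$(iii)$). Incidentally, your appeal to ``$\u_0=\0$ because $\u$ vanishes at infinity'' is not legitimate here --- the lemma concerns $D$--solutions of the equation $(\ref{conell})_1$ alone, with no condition at infinity --- but it is also unnecessary, since Wirtinger's inequality on circles removes the mean. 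The paper's actual proof is a direct differential inequality: setting $Q(R)=\int_{\complement S_R}(|\nabla\u|^2+|\div\u|^2)$, one tests with $g_\varrho\u$ on $\complement S_R$ and lets $\varrho\to\infty$; the far annulus contribution is $O(\|\nabla\u\|_{L^2(T_\varrho)})\to 0$ thanks to $(\ref{Serv})$ (valid there precisely because of the just--proved $(\ref{NRT0})$), Schwarz and Wirtinger. This leaves
$$
\mu_0\,Q(R)\le-\into{\partial S_R}\e_R\cdot\bigl[2\C[\nabla\u]-\mu_0(\nabla\u-(\div\u){\bf 1})\bigr]\u,
$$
and the single boundary integral over $\partial S_R$ is then estimated exactly as in Lemma \ref{Lem1} by $\frac{\mu_0}{\gamma}R\int_{\partial S_R}(|\nabla\u|^2+|\div\u|^2)=-\frac{\mu_0}{\gamma}RQ'(R)$; integrating $\gamma Q\le-RQ'$ gives $(\ref{Curti})$. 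That one--sided identity, with the boundary term only on $\partial S_R$, is the ingredient missing from your proposal.
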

\begin{proof}
As in the proof of Lemma \ref{Lem1}, it is sufficient to assume $\u$ regular. Multiplying $(\ref{conell})_1$ by the function (\ref{function1})  and integrating over  $\Omega$, we have
$$
\into {\partial\Omega}\s({\u})=\into {T_R}{\C}[\nabla{\u}]\nabla g_R.
$$
Hence  (\ref{NRT0}) follows   taking into account that by Schwarz's inequality
$$
 \left|\,\into {\partial\Omega}\s({\u})\right| =\left|\,\into {T_R}{\C}[\nabla{\u}]\nabla g_R\right|
 \le    {1\over R}\left\{\,\into {T_R}
|\nabla{\u}|^2\right\}^{1/2}\left\{\,\into {T_R}\right\}^{1/2}\le c\|\nabla\u\|_{L^2(T_R)},$$
and letting $R\to+\infty$.  

A standard computation yields
$$
\begin{array}{l}
\displaystyle \mu_0\into{\complement S_R}  g_{\varrho}\big(|\nabla\u|^2+|\div\u|^2\big)  \displaystyle=2\mu_0\into{\complement S_R}  g_{\varrho}|\hat\nabla\u|^2+\mu_0\into{\partial S_R}\e_R\cdot[\nabla\u  -(\div\u){\bf 1}] \u \\[20pt]
\qquad\displaystyle -{{\mu_0}\over {\varrho}}\into{T_{\varrho}}\e_r \cdot[\nabla\u  -(\div\u){\bf 1}] \u\le  - \into{\partial S_R}\e_R\cdot[2\Ci[\nabla\u]-\mu_0(\nabla\u  -(\div\u){\bf 1})] \u \\[20pt]
\qquad\displaystyle +{1\over {\varrho}}\into{T_{\varrho}}\e_r\cdot[2\Ci[\nabla\u]-\mu_0(\nabla\u  -(\div\u){\bf 1})] \u, 
\end{array}
$$
for $\varrho\gg R_0$. Hence, since by (\ref{Serv}), Schwarz's inequality and Wirtinger's inequality
$$
\left|{1\over {\varrho}}\into{T_{\varrho}}\e_r\cdot[2\Ci[\nabla\u]-\mu_0(\nabla\u  -(\div\u){\bf 1})] \u\right|\le c\|\nabla\u\|_{L^2(T_{\varrho})},
$$
letting $\varrho\to+\infty$, it follows
\begin{equation}
\label{ASERTA}
 \mu_0\into{\complement S_R}   \big(|\nabla\u|^2+|\div\u|^2\big)  \displaystyle \le  - \into{\partial S_R}\e_R\cdot[2\Ci[\nabla\u]-\mu_0(\nabla\u  -(\div\u){\bf 1})] \u.  \end{equation}
Now  proceeding as we did in the proof of Lemma \ref{Lem1},    (\ref{ASERTA}) yields
\begin{equation}
\label{Casagiove}
\gamma Q(R)=\gamma \into{\complement S_R}   \big(|\nabla\u|^2+|\div\u|^2\big) \le R\into{\partial S_R}   \big(|\nabla\u|^2+|\div\u|^2\big).  \end{equation}
 Since by the basic calculus
 $$
Q'(R)=-\into{\partial S_R}\big(|\nabla\u|^2+|\div\u|^2\big),
$$
(\ref{Curti}) follows from (\ref{Casagiove}) by a simple integration.
\end{proof}
 
 \begin{lemma}
\label{Lem3}
There is $\epsilon=\epsilon(\gamma)>0$ such that every $D$--solution $\u$ to $(\ref{conell})_{1}$  belongs to $D^{1,q}({\mathcal I})$ for all $q\in (2-\epsilon,2+\epsilon)$. Moreover, if $\Ci$ is regular at infinity, then $\u$  $D^{1,q}({\mathcal I})$ for all $q\in(1,+\infty)$. 
\end{lemma}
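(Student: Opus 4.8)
The plan is to treat Lemma~\ref{Lem3} as a Meyers-type higher-integrability statement: one first produces the self-improvement of the integrability exponent \emph{locally} — which holds for \emph{any} solution of an elliptic system with bounded measurable coefficients obeying $(\ref{Sxxa})$ — and then \emph{globalizes} it over the unbounded end $\mathcal I$ by summing the resulting annular bounds against the fixed algebraic decay of the tail Dirichlet integral provided by Lemma~\ref{Lem2}. The two sides of the exponent $2$ are handled differently. For $q\in(2,2+\epsilon_0)$: on a ball $B=B_\rho(x_0)$ with $B_{2\rho}(x_0)\Subset\mathcal I$, a standard interior Caccioppoli inequality for $\div\C[\nabla\u]={\bf 0}$ (the computation in the proof of Lemma~\ref{Cacciol}, localized, together with Korn's first inequality) gives $\int_B|\nabla\u|^2\le c\rho^{-2}\int_{B_{2\rho}(x_0)}|\u-m|^2$ for a suitable constant vector $m$; since in dimension two $W^{1,1}\hookrightarrow L^2$ with scale-invariant norm, Sobolev--Poincar\'e turns this into the reverse H\"older inequality $\big(\dashint_B|\nabla\u|^2\big)^{1/2}\le c\,\dashint_{B_{2\rho}(x_0)}|\nabla\u|$, and Gehring's lemma then yields $\epsilon_0=\epsilon_0(\mu_0,\mu_e)>0$ and a scale-invariant $c$ with $\big(\dashint_B|\nabla\u|^{2+\epsilon_0}\big)^{1/(2+\epsilon_0)}\le c\big(\dashint_{B_{2\rho}(x_0)}|\nabla\u|^2\big)^{1/2}$.

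Next I would covering the annulus $T_R=S_{2R}\setminus S_R$ ($R\gg R_0$) by finitely many such balls of radius $\simeq R$ contained in $S_{4R}\setminus S_{R/2}$ and bound $\int_{S_{4R}\setminus S_{R/2}}|\nabla\u|^2\le cR^{-\gamma}\int_{\complement S_{R_0}}|\nabla\u|^2$ by $(\ref{Curti})$, obtaining
\[
\into{T_R}|\nabla\u|^{2+\epsilon_0}\le c\,R^{2}\big(R^{-2-\gamma}\big)^{1+\epsilon_0/2}=c\,R^{-\epsilon_0-\gamma(1+\epsilon_0/2)}\,;
\]
summing over the dyadic annuli $R=2^kR_0$ (the exponent being strictly negative) gives $\nabla\u\in L^{2+\epsilon_0}(\mathcal I)$, and on the bounded collar $R_0<r<2R_0$ the same conclusion is immediate from interior higher integrability.

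For $q\in\big(\tfrac{4}{2+\gamma},2\big)$ no regularity theory is needed: H\"older's inequality together with $(\ref{Curti})$ yields $\int_{T_R}|\nabla\u|^q\le|T_R|^{1-q/2}\big(\int_{T_R}|\nabla\u|^2\big)^{q/2}\le c\,R^{2-q-\gamma q/2}$, and $\sum_k (2^k)^{2-q-\gamma q/2}<+\infty$ precisely when $q>\tfrac{2}{1+\gamma/2}=\tfrac{4}{2+\gamma}$, a number strictly below $2$ since $\gamma>0$. Setting $\epsilon:=\min\{\epsilon_0,\tfrac{2\gamma}{2+\gamma}\}$ — and noting that $\epsilon_0$ is itself governed by $\mu_0,\mu_e$, hence ultimately by $\gamma$ — and recalling that $\u\in L^1_{\rm loc}$ comes for free from $\u\in W^{1,2}_{\rm loc}$, this proves $\u\in D^{1,q}(\mathcal I)$ for every $q\in(2-\epsilon,2+\epsilon)$.

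Finally, if $\C$ is regular at infinity, then for each $\delta>0$ there is $R_\delta$ with $\|\C-\C_0\|_{L^\infty(\complement S_{R_\delta})}<\delta$, so on $\complement S_{R_\delta}$ the field $\u$ solves the constant-coefficient strongly elliptic system $\div\C_0[\nabla\u]=\div\big((\C_0-\C)[\nabla\u]\big)$, whose right-hand side is in divergence form and pointwise $\le\delta|\nabla\u|$. The Calder\'on--Zygmund/Gehring theory for elliptic operators of small oscillation then gives, for each $q<+\infty$, a threshold $\delta(q)>0$ such that when $\delta<\delta(q)$ this equation propagates $L^q$-integrability of the gradient; equivalently, the local higher-integrability exponent of $\u$ on $\complement S_{R_\delta}$ can be pushed past any prescribed value by taking $R_\delta$ large. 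Feeding this local $L^q$ bound on the annuli $T_R$ into the same dyadic summation against $(\ref{Curti})$ yields $\nabla\u\in L^q(\mathcal I)$ for all $q\in(1,+\infty)$. I expect the only genuinely delicate points to be (a) the globalization itself — the annular sums converge \emph{only} because Lemma~\ref{Lem2} supplies a fixed rate $\gamma>0$ — and (b) in the regular-at-infinity case, the quantitative dependence $\delta(q)\to0$ as $q\to+\infty$ in the perturbative Calder\'on--Zygmund estimate, which forces one to move further out as $q$ grows; this is harmless since $\mathcal I$ is fixed and only the behaviour near infinity matters.
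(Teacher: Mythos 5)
Your proof of the first assertion is correct, but it follows a genuinely different route from the paper's. You obtain the Meyers exponent \emph{locally} (Caccioppoli with Korn, Sobolev--Poincar\'e, Gehring) and then globalize by summing dyadic annuli against the tail decay $(\ref{Curti})$; the paper instead writes $\vi=\eta_{\bar R}\u$ as a fixed point of $\vi\mapsto\vi_f+\boldsymbol{\mathscr{Q}}[\vi]$, where $\boldsymbol{\mathscr{Q}}$ is the singular--integral operator built from the fundamental solution of the constant comparison tensor $\Cc_{0\,ijhk}=\mu_e\delta_{ih}\delta_{jk}$, and uses Campanato's bound $\|\boldsymbol{\mathscr{Q}}[\vi]\|_{D^{1,q}}\le c(q)\frac{\mu_e-\mu_0}{\mu_e}\|\vi\|_{D^{1,q}}$ with $c(q)\to1$ as $q\to2$ to get a contraction in $D^{1,q}$ for $q$ near $2$. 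Your version is more elementary and self-contained; note in passing that for $q>2$ your dyadic sum converges even without $(\ref{Curti})$ (using only $\int_{\complement S_{R_0}}|\nabla\u|^2<\infty$ one gets $\int_{T_R}|\nabla\u|^{2+\epsilon_0}\le cR^{-\epsilon_0}$), so the decay rate $\gamma$ is genuinely needed only on the side $q<2$, exactly where it produces your threshold $4/(2+\gamma)$.

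The second assertion is where the proposal has a real gap. Your globalization for $q<2$ rests on $\int_{T_R}|\nabla\u|^{q}\le cR^{2-q-\gamma q/2}$, summable only for $q>4/(2+\gamma)$. Regularity of $\C$ at infinity improves the \emph{local} exponent (your small-oscillation Calder\'on--Zygmund step is fine, and it does settle all $q>2$), but it does not improve the \emph{tail decay rate}: $(\ref{Curti})$ comes from hole--filling and gives $\gamma=4\mu_0/(5\mu_0+8\mu_e)\le 4/13$ no matter how close $\C$ is to a constant tensor, so $4/(2+\gamma)\ge 26/15$ and your scheme cannot reach any $q\le 26/15$ — most of the interval $(1,2)$ is missed. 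Reaching all $q\in(1,+\infty)$ requires a genuinely global mechanism rather than ``local estimate $+$ dyadic summation'': the paper takes $\bar R$ so large that $|\C-\C_0|$ is uniformly small on the support of $\nabla\eta_{\bar R}\cdot(\dots)$, making $\boldsymbol{\mathscr{Q}}$ a contraction on $D^{1,q}({\Bbb R}^2)$ for \emph{every} fixed $q\in(1,+\infty)$, which yields $\nabla\u\in L^q$ near infinity without ever invoking a decay rate. Alternatively you would need to first upgrade the tail decay to $\int_{\complement S_R}|\nabla\u|^2=O(R^{-2+\delta})$ for small $\delta$, which is not available at this point of the paper.
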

\begin{proof}
Let $\eta_{\bar R}$ be the function   (\ref{function2}) for large $\bar R$. The field $\vi= \eta_{\bar R}\u$   is a variational solution to 
\begin{equation*}
%\label{BVVRCaw}
\div\Ci_0[\nabla\vi]+\div(\Ci-\Ci_0)[\nabla\vi]+\f={\bf 0}\quad \hbox{\rm in }{\Bbb R}^2,
\end{equation*}
where $\f$ is defined by (\ref{Bxvvfrw}) and $\Ci_0$ is a  constant elasticity tensor. Let $\boldsymbol{\mathscr{U}}(x-y)$ be the fundamental solution to the equation $\div\Ci_0[\nabla\vi]=\0$,
the integral transform
$$
\boldsymbol{\mathscr{Q}}[\vi](x)=\nabla\into{{\Bbb R}^2} \boldsymbol{\mathscr{U}}(x-y) ( \Ci-\Ci_0)[\nabla\vi](y)dv_y
$$
maps  $D^{1,q}$ into itself for every $q\in(1,+\infty)$. Set
$$
\vi_f(x)=\into{{\Bbb R}^2} \boldsymbol{\mathscr{U}}(x-y)\f(y)dv_y\in D^{1,q}({\Bbb R}^2),\quad\forall  q\in (1,+\infty),
$$
and consider the functional equation 
\begin{equation}
\label{Dsq2sl}
\vi'(x)=\vi_f(x)+ \boldsymbol{\mathscr{Q}} [\vi](x).
\end{equation} 
Choose
$$
{\Cc}_0{_{ijhk}}= \mu_e\delta_{ih}\delta_{jk}. 
$$
Since \cite{CampanatoL}
$$
\big\|\boldsymbol{\mathscr{Q}}[\vi]\big\|_{  D^{1,q} }\le c(q){\mu_e-\mu_0\over \mu_e}\|\vi\|_{  D^{1,q} }
$$
and 
$$
\lim_{q\to 2}c(q)=1,
$$
there is $\epsilon>0$ such that  (\ref{Dsq2sl})  is a contraction in $  D^{1, q}$, $q\in (2-\epsilon,2+\epsilon)$.   If $\Ci$ is regular at infinity, then, choosing $\bar R$ large as we want, we can do $|\Ci(x)-\Ci_0|$ arbitrarily small and, as a consequence, $\|\boldsymbol{\mathscr{Q}}[\vi]\|_{D^{1,q}}\le \beta\|\vi\|_{D^{1,q}}$, for  every  positive $\beta$ and this is sufficient to conclude the proof. 
\end{proof}

\medskip

Extend $\C$ to the whole of ${\Bbb R}^2$ by setting $\C=\bar {\C} $ in $\complement\Omega$ (say), with $\bar {\C}$ constant and positive definite. Clearly, the new elasticity tensor (we denote by the same symbol)  satisfies (\ref{Sxxa}) (almost everywhere) in ${\Bbb R}^2$. 

The H\"older regularity of variational solutions to (\ref{conell})$_1$ is sufficient to prove the unique existence of a fundamental (or Green) function $\Q(x,y)$ to (\ref{conell})$_1$ in ${\Bbb R}^2$ (see  \cite{Chanillo}, \cite{ Dong}, \cite{Kenig}, \cite{TKB}), which satisfies
$$
{\bmit\varphi}(x)=\into {{\Bbb R}^2}\nabla{\bmit\varphi}(y)\cdot\C[\nabla{\bmit G}(x,y)]da_y,
$$
for all ${\bmit\varphi}\in C^\infty_0({\Bbb R}^2)$.  It is a variational solution to (\ref{conell})$_1$  in $x$ [resp. in $y$] in every domain not containing $y$ [resp. $x$]. Moreover,  $\Q(x,y)=\Q^\top(y,x)$ and for $\f\in {\mathcal H}^1({\Bbb R}^2)$ the field
 $$
 \u(x)=\into {{\Bbb R}^2}{\bmit G}(x,y){\bmit f}(y)da_y\in D^{1,2}({\Bbb R}^2)\cap C^{0,\mu}_{\rm loc}({\Bbb R}^2)
 $$
 is the unique variational solution to 
\begin{equation}
\label{kkki89}
 \div\C[\nabla\u]+\f={\bf 0}\quad\hbox{\rm in }{\Bbb R}^2.
\end{equation}
 
 ${\bmit G}(x,\cdot)$ belongs to the John--Niremberg space $BMO({\Bbb R}^2)$  (see, $e.g$.,  \cite{Giusti}) and has a logarithm  singularity at $x$ and at infinity. Set $\w(x)={\bmit G}(x,o)\e$, with $\e$ constant vector. Let us show that 
$\nabla\w\not\in L^2(\complement S_{R_0})$ and $\nabla\w \in L^q(\complement S_{R_0})$ for all $q$ in a right neighborhood of 2. Indeed, if $\w\in D^{1,2}(\complement S_{R_0})$, then, by applying \eqref{CaccioppoliOut} and H\"older's inequality, we get
$$
\into {\complement S_R}|\nabla\w|^2\le  CR^{-4/q}\left\{\into {T_R}|\w|^q\right\}^{2/q},\quad q>2.
$$
Therefore, from (\ref{Curti}) it follows
\begin{equation*}
%\label{llfsse6}
\into {\complement S_{R}} |\nabla\w|^2  \le  {c\rho^{\gamma-4/q}\over R^\gamma}\into {T_\rho}|\w|^q.
 \end{equation*} 
Hence, choosing $q>4/\gamma$, letting $\rho\to 0$   and taking into account that $\w\in L^q_{\rm loc}({\Bbb R}^2)$, we have the contradiction $  \nabla\w={\bf 0}$. 
The field $\vi= \eta_{R_0}\w$ is a solution to (\ref{kkki89})
   where $\eta_R$  and $\f$ are  defined by (\ref{function2}), (\ref{Bxvvfrw}), respectively. 
 By well--known estimates \cite{TKB} and \eqref{Caccioppoli} for large $R$, we have 
  $$
 \begin{array}{r@{}l}
 \displaystyle\left(\into {S_R}|\nabla\vi|^q\right)^{1\over q} & {} \displaystyle\le c\left\{R^{-1+2/q} \left(\into {S_R}|\nabla\vi|^2\right)^{1\over 2}+c_{\f}\right\}\\[12pt]
 & {} \displaystyle\le c\left\{R^{-2+2/q} \left(\into {T_R}|\w|^2\right)^{1\over 2}+c_{\f}\right\},
 \end{array}
 $$
for $q\in (2,\bar q)$, with $\bar q>2$ depending on $\mu_0$, where $c_{\f}$ is a constant depending on $\f$.

Hence, letting $R\to+\infty$ and bearing in mind the behavior of $\w$ at large distance, it follows that  $\nabla\w\in L^q(\complement S_{R_0})$. 
 Collecting the above results we can say that the fundamental function satisfies:

\begin{itemize}
\item[(\i)] ${\bmit G}(x,y)\not\in D^{1,2}(\complement S_{R}(x))$ for all $R>0$;
\item[(\i\i)] ${\bmit G}(x,y)\in D^{1,q}(\complement S_{R_0}(x))$,  for all $q\in (2,\bar q)$, with  $\bar q>2$ depending on $\mu_0$.
\end{itemize} 
 \section{Proof of Theorems \ref{T1}, \ref{T3} }
 \label{PT4}
{\sc Proof of Theorem \ref{T1}}. $(i)$  --  If $\h(\ne{\bf 0})\in \boldsymbol{\frak M}$, then  $\into {\partial\Omega}\s(\h)\ne \0$, otherwise, bearing in mind that $\h\in BMO$, Caccioppoli's inequality writes
$$
\into{S_{R/2}}|\nabla\h|^2\le {c\over R^2} \into{S_R}\left|\h-{1\over |S_R|}\int_{S_R}\h\right|^2\le c,
$$
for some $c$ independent of $R$. Hence $\h$ should be a $D$--solution and so by uniqueness $\h=\0$.
Let $\u_i\in D^{1,2}(\Omega)$ $(i=1,2)$ be the solutions to (\ref{conell})$_{1,2}$ with $\hat{\u}_i=-{\bmit G}(x,o)\e_i$ and set $\h_i=\u_i+{\bmit G}(x,o)\e_i$.  If 
$\alpha_i\h_i=\0$, then $\alpha_i{\bmit G}(x,o)\e_i\in D^{1,2}(\Omega)$ and this is possible if and only if $\alpha_i\e_i=\0$, $i.e.$ $\alpha_i=0$ and the system $\{\h_1,\h_2\}$ is linearly independent. Therefore $\hbox{\rm dim}\,\boldsymbol{\frak M}\ge 2$. Clearly, for every $\{\h_1,\h_2,\h_3\}\subset{\frak M}$ the system   $\{\into {\partial\Omega}\s(\h_i)\}_{i\in\{1,2,3\}}$ is linear dependent. Therefore, there are (not all  zero) scalars $\alpha_i$ such that $\into {\partial\Omega}\s(\h)=\0$, with $\h=\alpha_i\h_i$. Since this implies that $\h=\0$,  we conclude that $\hbox{\rm dim}\,\boldsymbol{\frak M}=2$. It is obvious that if  $\{\h_1,\h_2\}$ is a basis of $\boldsymbol{\frak M}$, then $\Big\{\into {\partial\Omega}\s(\h_1), \into {\partial\Omega}\s(\h_2)\Big\}$ is a basis of ${\Bbb R}^2$. 

\smallskip

  $(ii)$  -- Multiply (\ref{conell})$_1$ scalarly by $g_R\h$, with $\h\in{\frak M}$. Integrating
by parts  we get 
\begin{equation}
\label{kkkaft5}
\begin{array}{r@{}l}
\displaystyle\into {\partial\Omega}(\hat{\u}-\u_0)\cdot \s(\h) & {} \displaystyle=-{1\over R}  \into {T_R}(\u-\u_0)\cdot{\C}[\nabla
\h]\e_R\\[15pt]
& {} \displaystyle+{1\over R}\into {T_R}\h \cdot{\C}[\nabla \u]\e_R.
\end{array}
\end{equation}

Choosing $s(<2)$ very close to $2$ we have
$$
\begin{array}{l}
\displaystyle {1\over R}\left|\ \into {T_R}(\u-\u_0)\cdot{\C}[\nabla \h]\e_R\right|
  \le  c\left\{\into {T_R}{|\u-\u_0|^s\over
r^s}\right\}^{1/s}\left\{\into {\Omega} |\nabla \h|^{s'} \right\}^{1/s'}\\[12pt]
\displaystyle {1\over R}\left|\ \into {T_R}\h\cdot{\C}[\nabla \u]\e_R\right|
  \le  c\left\{\into {T_R}{|\h  |^{s'}\over
r^{s'}}\right\}^{1/s'}\left\{\into {\Omega} |\nabla \u|^{s } \right\}^{1/s }.
\end{array}
$$
Therefore, letting  $R\to+\infty$ in (\ref{kkkaft5}), in virtue of Lemma \ref{Inequal} and \ref{Lem3} and the properties of ${\bmit G}$, we see that  
\begin{equation*}
%\label{kssft5}
\into {\partial\Omega}(
\hat{\u}-\u_0)\cdot \s(\h)={\bf 0},\quad\forall\, \h\in{\frak M}.
\end{equation*}
Hence it follows that  $\u_0=\0$ if and only if $\hat{\u}$ satisfies (\ref{defu0}).

\medskip

 $(iii)$  -- If $\u=o(r^{\gamma/2})$ is a nonzero  variational solution to (\ref{conell})$_1$, vanishing on $\partial\Omega$, then there are scalars $\alpha_1$ and $\alpha_2$ such that
$$
\into{\partial\Omega}\s(\u)=\alpha_1\into{\partial\Omega}\s(\h_1)+\alpha_2\into{\partial\Omega}\s(\h_2),
$$
where $\{\h_1,\h_2\}$ is a basis of ${\frak M}$.
Therefore, by (\ref{Morr2}) and \eqref{Caccioppoli} the field $\vi=\u-\alpha_1\h_1-\alpha_2\h_2$ satisfies
$$
\into{  \Omega_\rho}|\nabla\vi|^2\le  {c\rho^\gamma\over R^{2+\gamma}}   \into{ T_R}| {\bmit v}|^2.
$$
Hence, letting $R\to+\infty$, it follows that $\u\in\boldsymbol{\frak M}$. Clearly, if $\u(x)=o(\log r)$, then $\u=\0$.

\medskip

 $(iv)$  --  Let $R<|\x|<2R$, $R\gg R_0$,   let $\mathscr{A}$ be a neighborhood of $x$. By H\"older's inequality and Sobolev's inequality
$$
 \into \mathscr{A}|\u|^2\le c \left\{\into {\mathscr{A}}|\u|^{2q/(2-q)}\right\}^{(2-q)/q}\le c
\left\{\into {\complement S_R}|\nabla\u|^q\right\}^{2/q},
$$
for $q\in (2-\epsilon(\gamma),2)$. Hence by the classical {\it convexity inequality\/}
$$
\|\nabla\u\|_{L^q(\complement S_R)}\le \|\nabla\u\|_{L^s(\complement S_R)}^\theta\|\nabla\u\|^{1-\theta}_{L^2(\complement S_R)}, 
$$
with $2-\epsilon(\gamma)<s<q$, $\;\theta= s(2-q)/q(2-s)$, taking into account Lemma \ref{Lem2} and \ref{Lem3}, it follows 
\begin{equation}
\label{ksss31}
 \into {\mathscr{A}}|\u|^2\le cR^{( \theta-1)\gamma}.
\end{equation} 
Putting together (\ref{Curti}), (\ref{ksss31}), we have
$$
\into {\mathscr{A}}|\u|^2+{1\over \rho^{2+\gamma}}\into {S_\rho(x)}|\u-\u_{S_\rho(x)}|^2\le cR^{( \theta-1)\gamma}.
$$
Hence (\ref{desss1}) follows taking into account well--known results of S. Campanato (see, $e.g.$, \cite{Giusti} Theorem 2.9) and that $\theta\to 0$ for $q\to2$.

\smallskip

Let now $\C$ satisfy  (\ref{lkj˜jkskj9}) and let $\u'$, $\u''$ be the variational solutions to the systems
\begin{equation*}
%\label{Sistema1}
\begin{array}{r@{}l}
 \div{\C_0}[\nabla \u']  & {} ={ 0}\quad\hbox{\rm in
}S_R(x) ,\\[2pt]
\u'& {} = \u \quad\hbox{\rm on }\partial S_R(x) ,
\end{array}
\end{equation*}
and 
\begin{equation*}
%\label{Sistema2}
\begin{array}{r@{}l}
 \div{\C_0} [\nabla \u'']+\div(\C-\C_0) [\nabla \u] & {} ={ \0}\quad\hbox{\rm in
}S_R(x) ,\\[2pt]
 \u'' & {} ={\bf 0} \quad\hbox{\rm on }\partial S_R(x),
\end{array}
\end{equation*}
respectively.
Applying Poincar\'e's and Caccioppoli's inequalities  we have 
$$
\into {S_R(x)}| \u''|^2\le  R^2\into {S_R(x)}|\nabla\u''|^2\le c(\epsilon)R^2\into {S_R(x)}|\nabla\u|^2\le c(\epsilon) \into {T_R(x)}| \u|^2.
$$
Hence, taking into account that
$$
\into {S_\rho(x)}|\u'|^{2 }\le c\left({\rho\over R}\right)^{2 }\into {S_R(x)}|\u'|^2,
$$
 it follows \cite{CampanatoL} (see also \cite{RUTA})
\begin{equation}
 \label{KKK2}
\into {S_\rho(x)}| \u|^2  \le c\left({\rho\over R}\right)^{2-\epsilon }\into {S_R(x)}| \u|^2 . 
\end{equation} 
Putting together (\ref{KKK2}) and H\"older's inequality 
$$
\into {S_R(x)}|\u|^2\le cR^{{2(s-2)}/s}\left\{\into {S_R(x)}|\u|^s\right\}^{2/s},
$$
for $s>2$, we get
\begin{equation}
 \label{KKK3}
\begin{array}{r }
\displaystyle\into {S_\rho(x)}|\u|^2 + {1\over \rho^{4-\epsilon}}\into {S_\rho(x)}|\u-\u_{S_\rho(x)}|^2 \le   {c\over R^{2-\epsilon}}\into {S_R(x)}|\nabla\u|^2\\[12pt]
 \displaystyle+cR^{\epsilon-2+2(s-2)/s}\left\{\into {S_R(x)}|\u|^s\right\}^{2/s}.
 \end{array}
\end{equation} 
Since we can choose $s(>2)$ near to 2 as we want, (\ref{KKK3}) yields
$$
|\u(x)|\le {c\over |\x|^{1-\epsilon}},
$$
for all positive $\epsilon$.

\hfill$\square$

\bigskip

{\sc Proof of Theorem \ref{T3}}. If $\C$ satisfies the stronger assumption (\ref{SEaabb}), by the argument in \cite{PS} one shows that
a variational solution to $\div\C[\nabla\u]=\0$ in $S_R(x)$ satisfies
\begin{equation*}
% \label{KKzz}
\into {S_\rho(x)}| \nabla\u|^2  \le c\left({\rho\over R}\right)^{2/\sqrt L }\into {S_R(x)}|\nabla\u|^2, 
\end{equation*} 
for every $\rho\in(0,R]$ and the Lemmas hold with $\gamma$ replaced by $2/{\sqrt L}$. Hence the desired results follow by repeating the steps in the proof of Theorem \ref{T1}.

\hfill$\square$
\section{A Counter--example }
\label{DeGC}
 The following slight modification of a famous counter--example by E. De Giorgi \cite{DeGiorgi}  assures that the uniqueness
class  in Theorem \ref{T3} and the rates of decay are sharp. 
 
  Let $\tilde{\C}$ be the symmetric elasticity tensor defined by
 \begin{equation*}
 %\label{SistDG}
 \tilde{\C}[{ \L}]=  \hbox{\rm sym}\,\L +4\xi^{-2}(\e_r\otimes\e_r)(\e_r\cdot\L\e_r),\quad\xi\ne 0,\ {\bmit L}\in {\rm{Lin}}.
 \end{equation*}
Clearly,  $\tilde{\C}$ is bounded on ${\Bbb R}^2$ and
 $C^\infty$ on
 ${\Bbb R}^2\setminus\{o\}$. Since 
 $${ \L}\cdot\tilde{\C}[{ \L}]=4\xi^{-2}|\e_r\cdot\L\e_r|^2+|\L|^2,\quad\forall\,{\L}\in
 \hbox{\rm Sym},
 $$ 
 $\tilde{\C}$ satisfies \eqref{Sxxa} with $\mu_0=1$ and $\mu_e=1+4\xi^{-2}$.  A simple computation  \cite{DeGiorgi} shows
that the equation
 \begin{equation*}
 %\label{nhgyu}
 \div\tilde{\C}[\nabla{\u}]= { \0} 
 \end{equation*}
  admits the family of solutions 
 \begin{equation*}
 %\label{CounEx}  
\u'= ( c_1r^\epsilon+c_2r^{-\epsilon}) \e_r,
 \end{equation*}
with  
$$
\epsilon= 
{|\xi|\over\sqrt{4+\xi^2}},
$$  
for every $c_1$, $c_2\in {\Bbb R}$.  Of course, for $c_1=1$, $c_2=-1$, $\u'={ \0}$ on $\partial S_1$ and $\u'\in D^{1,q}(\complement S_1)$ for
$q>2/(1-\epsilon)$,
$ \u'\not\in D^{1,q}(\complement S_1)$ for $q\le2/(1-\epsilon)$ so that, in particular, bearing in mind the properties of ${\bmit G}$, $\u'\not\in{\frak M}$. 
   For differential systems satisfying the stronger assumption (\ref{SEaabb})
the above example shows that the decay  $\u-\u_0=o(r^{{1/\sqrt L}})\}$ is optimal for  $D$--solutions  and the class $\{\u:\u=o(r^{1/\sqrt L})\}$ 
is    borderline   for uniqueness of the variational solution to the Dirichlet problem up to a field of ${\frak M}$.

\vfill\eject

{\small}

\end{document}